\definecolor{darkblue}{rgb}{0,0,0.5}
\newenvironment{definition}[1][Definition:]{\begin{trivlist}
\item[\hskip \labelsep {\bfseries #1}]}{\end{trivlist}}
\newtheorem{theorem}{Theorem}[section]
\newtheorem{lemma}[theorem]{Lemma}
\newtheorem{proposition}[theorem]{Proposition}
\newtheorem{corollary}[theorem]{Corollary}
\newenvironment{remark}[1][Remark:]{\begin{trivlist}
\item[\hskip \labelsep {\bfseries #1}]}{\end{trivlist}}
\newcommand{\C}{\mathbb{C}}
\newcommand{\D}{\mathcal{D}}
\newcommand{\R}{\mathbb{R}}
\newcommand{\Hcal}{\mathcal{H}}
\newcommand{\im}{\mathrm{im} \,}
\newcommand{\vol}{\mathrm{vol}}
\newcommand{\T}{\mathcal{T}}
\newcommand{\X}{\mathcal{X}}
\newcommand{\Y}{\mathcal{Y}}
\newcommand{\real}{\mathrm{real}}
\newcommand{\M}{\mathcal{M}}
\newcommand{\N}{\mathcal{N}}
\renewcommand{\L}{\mathcal{L}}
\renewcommand{\tilde}{\widetilde}
\renewcommand{\Im}{\mathrm{Im}\,}
\renewcommand{\Re}{\mathrm{Re}}
\newcommand{\Emb}{\mathrm{Emb}}
\newcommand{\Diff}{\D}
\newcommand{\U}{\mathcal{U}}
\newcommand{\Herm}{\mathrm{Herm}}
\newcommand{\SL}{\mathrm{SL}}
\newcommand{\PSL}{\mathrm{PSL}}
\newcommand{\two}{\mathrm{\MakeUppercase{\romannumeral 2}}}
\newcommand{\norm}[1]{\lVert #1 \rVert}
\newcommand{\supp}{\mathrm{supp} \,}
\newcommand{\ssubset}{\subset\joinrel\subset}
\newcommand{\news}{s}
\newcommand{\newszero}{\news_{\Lambda^0(L)}}
\newcommand{\newstwo}{\news_{\Lambda^2(L)}}
\newcommand{\lot}{\mathrm{l.o.t.}}
\newcommand{\reg}{\mathrm{reg}}
\renewcommand{\d}{\mathrm{d}}
\newcommand{\B}{\mathcal{B}}
\DeclareMathOperator{\coker}{\mathrm{coker}}
\DeclareMathOperator{\ind}{\mathrm{ind}}
\begin{document}
    \title{Generic special Lagrangian moduli spaces of a non-K\"ahler Calabi--Yau threefold}
    \author{Benjamin Friedman \thanks{Department of Mathematics, The University of British Columbia, 1984 Mathematics Road,
    Vancouver Canada, \href{benji@math.ubc.ca}{benji@math.ubc.ca}}}
    \date{} 
    
    \maketitle
    
    \begin{abstract}
    Given a (possibly non-K\"ahler) Calabi--Yau threefold $(X,\Omega)$, we introduce the notion of a (perturbed) special Lagrangian (SL) submanifold of $(X,\omega,\Omega)$, where $\omega$ is a Hermitian metric on $X$. The equations defining this class of submanifolds reduce to the usual SL equations when $\omega$ is a K\"ahler metric. Using the Sard--Smale technique, we prove the existence of a comeagre set of Hermitian metrics $\omega$ on $X$ such that the moduli space of perturbed SL submanifolds in $(X,\omega,\Omega)$ consists of isolated points.
    \end{abstract}
    
    \section{Introduction}
    \subsection{Motivation and main results}
    
        Calabi--Yau threefolds lie at the intersection of several disciplines, being of interest to differential geometers, algebraic geometers, and string theorists. However, discourses across these fields is muddled at times by the fact that several inequivalent definitions of a Calabi--Yau threefold are found throughout the literature. For this reason, let us define from the outset precisely what we mean:
        
    \begin{definition}
    A \textbf{Calabi--Yau threefold} is a compact, complex manifold $X$ of complex dimension $3$ which admits a nowhere vanishing holomorphic $(3,0)$-form $\Omega$.
    \end{definition}
    
    Note that the existence of such a form $\Omega$ is equivalent to $X$ having a trivial canonical bundle. By contrast, the traditional definition in differential geometry is that a Calabi--Yau threefold is a compact K\"ahler manifold of complex dimension $3$ with vanishing first Chern class. These manifolds admit Ricci flat K\"ahler metrics by Yau's Theorem \cite{Yau78}. In recent years there has been increasing interest in Calabi--Yau threefolds according to our chosen definition, which are sometimes referred to as \emph{non-K\"ahler Calabi--Yau threefolds}.
    
    One motivation for the study of non-K\"ahler Calabi--Yau threefolds comes from algebraic geometry \cite{Friedman_R, Reid87}, where such spaces may arise from conifold transitions as the smoothings $X_t$ of a singular conifold $X_0$ obtained by contracting $(-1,-1)$-curves in an initial K\"ahler Calabi--Yau threefold $\widehat{X}$. Another motivation comes from considerations of supersymmetry \cite{Strominger86} in string theory.
    
    Given a Calabi--Yau threefold $(X,\Omega)$ equipped with a Hermitian metric $\omega$, a \textbf{special Lagrangian submanifold} of $(X,\omega,\Omega)$ with \textbf{phase} $e^{i\theta_0} \in S^1$ is an embedded submanifold $L^3$ in $X$ satisfying
\begin{align*}
& \left.\omega\right|_L = 0 \\
& \left.\Im e^{-i\theta_0}\Omega\right|_L =0.
\end{align*}

 Special Lagrangian submanifolds were first introduced by Harvey and Lawson \cite{HarveyLawson}, who proved that after a conformal change of the metric to ensure that $|\Omega|_{\omega} \equiv 1$, such a submanifold is calibrated by the closed $3$-form $\Re\; e^{-i\theta_0} \Omega$, and hence minimizes the conformally transformed area functional in its homology class. In general, a Lagrangian submanifold $L$ in $X$ is associated to a phase function $e^{i\theta}: L \to S^1$ where $\left.\Omega\right|_L = e^{i\theta} |\Omega|_{\omega} \d\vol_L$, and the \emph{special} Lagrangian submanifolds are exactly those with constant phase $e^{i\theta_0}$. For our purposes the phase is irrelevant, so we will consider only special Lagrangian submanifolds for which $e^{i\theta_0} = 1$. In the case of K\"ahler Ricci-flat metric, one can show by the maximum principle that $|\Omega|_{\omega}$ is constant without the need for a conformal change, so that special Lagrangian submanifolds in that setting are calibrated.

	We will let $\M^{\SL}(X,\omega,\Omega,L)$ denote the \emph{moduli space} of embedded special Lagrangian submanifolds of $(X,\omega,\Omega)$ which are diffeomorphic to a given $L$.
		
	In the K\"ahler case, McLean \cite{McLean} showed that the moduli space $\M^{\SL}(X,\omega,\Omega,L)$ is itself a manifold of dimension $b_1(L)$. The idea is that at a particular special Lagrangian $f: L \hookrightarrow (X,\omega,\Omega)$, the condition $f^* \omega = 0$ together with the induced Riemannian metric on $L$ gives an isomorphism between normal vector fields along $f$ and $1$-forms on $L$. McLean showed that the infinitesimal special Lagrangian deformations correspond exactly to harmonic $1$-forms under this isomorphism, and that these deformations integrate to a $b_1(L)$-dimensional family of nearby special Lagrangian submanifolds. See Marshall's doctoral dissertation \cite{Marshall} for a particularly clear treatment.
	
	In a recent paper \cite{CGPY23}, Collins, Gukov, Picard, and Yau consider the problem of special Lagrangian submanifolds in a non-K\"ahler Calabi--Yau manifold. They construct unobstructed families of special Lagrangian submanifolds in manifolds of Iwasawa type, and prove that for small $t$, the vanishing spheres on the smoothings $X_t$ of a conifold transition can be deformed to special Lagrangian $3$-spheres with respect to $(X_t,\omega_t,\Omega_t)$, where $\omega_t$ denotes the balanced metric on $X_t$ constructed by Fu, Li, and Yau \cite{FLY12}.
	
	However, the work in \cite{CGPY23} leaves the moduli space problem open, and while they compute the linearization of the special Lagrangian equations, the authors point out that it is unclear whether an analogue of McLean's theorem holds in the non-K\"ahler setting. The aim of this paper is to address this gap in our understanding. 
	
	In order to investigate the moduli space problem, we work with the perturbed equations
    \begin{align*}
    &\left.\omega\right|_L + \d^{\dagger} \rho = 0 \\
    &\left.\Im \Omega\right|_L = 0
    \end{align*}
    where $\rho$ is a small exact $3$-form on $L$, and $\d^{\dagger}$ is the $L^2$-adjoint of the exterior derivative $\d$ on $L$ with respect to the induced metric. See Section~\ref{subsec-sl-psl} below for a more thorough definition and discussion of the system. When $L$ is compact, this system reduces to the usual special Lagrangian equations in the K\"ahler case, and is analytically better behaved than the usual SL system in the non-K\"ahler case. Letting $\M^{\PSL}(X,\omega,\Omega,L)$ denote the moduli space of embedded solutions to the perturbed problem (as we will see in Section~\ref{subsec-moduli-space}, a solution includes both an embedded submanifold $L$ and the $\Diff(L)$-orbit of an exact $3$-form $\rho$ on $L$ as data), our main result is the following:
    
	\begin{theorem}\label{thm-main}
    Let $(X,\Omega)$ be a compact Calabi--Yau threefold, and let $L^3$ be a fixed compact manifold of dimension $3$. Then within the space $\Herm(X)$ of Hermitian metrics on $X$ with the $C^{\infty}$ topology, there is a comeagre set $\Herm_{\reg}(X)$ such that for all $\omega \in \Herm_{\reg}(X)$, the moduli space $\M^{\PSL}(X,\omega,\Omega,L)$ consists of isolated points.
    \end{theorem}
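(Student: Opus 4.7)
The plan is to carry out a Sard--Smale argument for the perturbed SL equations. Fix a large integer $k$ and $\alpha\in(0,1)$. Writing nearby embeddings of $L$ as normal graphs $f_V$ over a reference embedding, I would form the parametric operator
$$F(\omega,V,\rho) \;=\; \bigl( (f_V)^*\omega + \d^\dagger_{(f_V)^*g_\omega}\rho,\; (f_V)^*\Im\Omega \bigr),$$
where $\omega$ ranges over the Banach manifold $\Herm^{k+1,\alpha}(X)$ of $C^{k+1,\alpha}$ Hermitian metrics, $V$ is a $C^{k+2,\alpha}$ normal section, and $\rho$ is a $C^{k+1,\alpha}$ exact $3$-form on $L$. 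The universal moduli space is $\mathcal{U}:=F^{-1}(0)$.

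The central step is transversality: I must show that $dF$ is surjective at every zero, so that $\mathcal{U}$ becomes a Banach submanifold. Under McLean's identification of normal vector fields at a PSL submanifold with $1$-forms $\alpha$ on $L$, the fiberwise linearization has the schematic form
$$(\alpha,\dot\rho)\;\longmapsto\;\bigl(\d\alpha+\d^\dagger\dot\rho+\lot,\;(\d^\dagger\alpha)\,\d\vol_L+\lot\bigr),$$
with lower-order terms arising from the torsion of $\omega$. A principal-symbol check shows this is an elliptic first-order system, hence Fredholm. A Hodge-theoretic calculation on $L^3$ identifies the cokernel (modulo the compact $\lot$-perturbation) with $\Hcal^2(L)$, the space of harmonic $2$-forms: $\d\alpha$ exhausts all exact $2$-forms and $\d^\dagger\dot\rho$ with $\dot\rho$ exact exhausts all coexact $2$-forms, while the $\Im\Omega$-component is cohomologically unobstructed since $\Im\Omega$ is closed. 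To kill the harmonic obstruction I would vary $\omega$: $d_\omega F$ contributes $\dot\omega|_L$ to the first component. Since $L$ is totally real at a PSL, the pointwise restriction of a $(1,1)$-form on $X$ to a real $2$-form on $L$ is surjective, and combining this with bump functions supported in a tubular neighborhood of $f(L)$ yields variations $\dot\omega$ that realize any prescribed harmonic $2$-form while preserving positivity of the Hermitian metric.

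The projection $\pi:\mathcal{U}\to\Herm^{k+1,\alpha}(X)$ is then Fredholm of the same index as the fiberwise linearization. Deforming the $\lot$ to zero (a compact perturbation preserving the index) reduces the computation to the K\"ahler model: the kernel consists of harmonic $1$-forms of dimension $b_1(L)$, and the cokernel of harmonic $2$-forms of dimension $b_2(L)=b_1(L)$ by Poincar\'e duality on the orientable $3$-manifold $L$, so $\ind\pi=0$. Sard--Smale then furnishes a comeagre set of regular values in $\Herm^{k+1,\alpha}(X)$, whose fibers $\pi^{-1}(\omega)=\M^{\PSL}(X,\omega,\Omega,L)$ are $0$-dimensional, i.e.\ consist of isolated points. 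To pass from $C^{k+1,\alpha}$ to $C^\infty$ I would apply the standard approximation argument: for each $k$ the smooth metrics in the (open) regular locus are $C^\infty$-dense, and the countable intersection over $k$ gives the claimed $\Herm_{\reg}(X)$. I expect the hardest step to be transversality, specifically showing that $\dot\omega$ variations fill out the $\Hcal^2(L)$-cokernel: this reduces to the pointwise surjectivity of $(1,1)$-forms onto $2$-forms on a totally real $3$-plane together with a global positivity check, and (secondarily) to careful bookkeeping of the $\lot$ induced by $\d\omega\neq 0$ in the non-K\"ahler regime.
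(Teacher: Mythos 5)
Your overall route coincides with the paper's: a universal moduli space cut out by the PSL operator, a fiberwise linearization $(\alpha,\dot\rho)\mapsto(\d\alpha+\d^{\dagger}\dot\rho+\lot,\ \d^{\dagger}\alpha+\lot)$ which is elliptic of index $b_1(L)-b_2(L)=0$, transversality achieved by varying $\omega$ via the pointwise surjectivity of restriction of real $(1,1)$-forms onto $2$-forms on a totally real $3$-plane together with cutoffs in a tubular neighbourhood, and then Sard--Smale applied to the Fredholm projection $\pi$ of index $0$. A few imprecisions in your setup are worth flagging but are repairable: a PSL submanifold is totally real only because $\rho$ is constrained to a small tube $\T_{f,\omega}$ (this must be built into the parameter space, not asserted); at a PSL submanifold $f^*\omega\neq 0$, so the identification of normal fields with $1$-forms should go through $J(TL)$ rather than ``McLean's identification''; the $\omega$-variation of $\d^{\dagger}_{f^*\omega}\rho$ contributes an extra term to $\partial s/\partial\omega$ beyond $f^*\dot\omega$, which the paper absorbs by proving surjectivity at $\d^{\dagger}\rho_0=0$ and using openness of the submersion condition; and the codomain must be restricted so that the $\Im\Omega$-component lands in exact $3$-forms (coexact $0$-forms), which is what makes the cokernel $\Hcal^2(L)$ rather than $\Hcal^2(L)\oplus\Hcal^0(L)$ --- your ``cohomologically unobstructed'' remark gestures at this but it needs the homotopy argument showing $f_{\alpha}^*\Im\Omega$ is exact.

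The genuine gap is the final passage from $C^{k+1,\alpha}$ to $C^{\infty}$. The set of regular values produced by Sard--Smale in $\Herm^{k+1,\alpha}(X)$ is comeagre but \emph{not} open, so your phrase ``the (open) regular locus'' is unjustified, and intersecting over $k$ does not by itself produce a comeagre subset of $\Herm(X)$ in the $C^{\infty}$ topology: comeagreness in each Banach space $\Herm^{k,\alpha}(X)$ does not transfer to the Fr\'echet space of smooth metrics. The paper resolves this with the Taubes trick: one exhausts the moduli space by subsets $\M_{\omega,K}$ of solutions satisfying quantitative geometric bounds (a non-collapsing inequality and an $L^p$ bound on the second fundamental form), proves that the set $\Herm_{\reg,K}(X)$ of metrics achieving transversality on the $K$-bounded part is \emph{open} in the $C^{\infty}$ topology --- this is exactly where a compactness theorem (Breuning's compactness for immersions with $L^p$-bounded second fundamental form) plus elliptic regularity of PSL solutions is needed, and it is precisely the ingredient your sketch omits --- and \emph{dense} by approximating a smooth metric with Hölder-regular ones; the comeagre set is then the countable intersection over $K$, not over $k$. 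Separately, your normal-graph chart is local, so to conclude for the full moduli space one must cover the (separable) space of embedded submanifolds of type $L$ by countably many such charts and intersect the resulting comeagre sets; this is routine but should be stated.
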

    
%
	Note that Theorem~\ref{thm-main} also implies that submanifolds of $X$ which are special Lagrangian in the usual sense are also isolated, since these submanifolds are exactly the PSL submanifolds for which $\rho = 0$.

	Theorem~\ref{thm-main} is in stark contrast to McLean's Theorem. Indeed, while the latter states that in a K\"ahler Calabi--Yau threefold the special Lagrangian submanifolds come in $b_1(L)$-dimensional families, our theorem implies that after perturbing the K\"ahler metric to a generic nearby Hermitian metric, all of the special Lagrangian submanifolds with respect to the new metric are isolated. Note that while the metric is perturbed, the complex structure of $X$ is fixed, so that Theorem~\ref{thm-main} is capturing a property of the underlying complex manifold.
    
    Theorem~\ref{thm-main} is also surprising when considered in the context of the SYZ conjecture, first proposed in 1996 by Strominger, Yau, and Zaslow \cite{SYZ96}. See Chapter 9 of \cite{Joyce07} for an introduction to SYZ from a geometric perspective. The SYZ conjecture posits that any compact K\"ahler Calabi--Yau threefold $(X,\omega,\Omega)$ is the total space of a fibration $\pi: X \to B$, where $B$ is a compact manifold of real dimension $3$, and the fibres $\pi^{-1}(b)$ are special Lagrangian submanifolds of $(X,\omega,\Omega)$, regular for all $b$ in a dense open subset of $B$. See \cite{Li22} for recent developments on constructions of such fibrations. If this conjecture is true, then Theorem~\ref{thm-main} implies that the special Lagrangian fibration is unstable under non-K\"ahler deformations of the metric.
    
    A potential application of Theorem~\ref{thm-main} is in the development of topological invariants associated to a non-K\"ahler Calabi--Yau threefold $(X,\Omega)$ which involve counting isolated special Lagrangian submanifolds. This idea is inspired by the program of Joyce \cite{Joyce02} concerning invariants defined by counting special Lagrangian \emph{rational homology 3-spheres}, which appear in 0-dimensional moduli spaces. In contrast, our Theorem~\ref{thm-main} produces isolated special Lagrangian submanifolds of any topological type. 
        
    Of course for such a count to be non-trivial, we must ensure that at least \emph{some} Lagrangian submanifolds survive a non-K\"ahler perturbation of the metric. This is a serious concern, since certain geometric perturbations may cause all special Lagrangian submanifolds to disappear. For instance, consider the ``toy example'' of the Calabi--Yau torus $(\C / \Lambda, \omega_0, dz)$, where $\Lambda \subset \C$ is the lattice generated by $1$ and $re^{i\theta}$ for $0<\theta \leq \frac{\pi}{2}$. There are special Lagrangian circles, that is closed geodesics, with phase $e^{i\theta}$, which by McLean's Theorem, come in a $1$-dimensional family. However, a deformation of the complex structure, which can be realized by deforming the lattice $\Lambda$, will result in the entire family of special Lagrangian circles of phase $e^{i\theta}$ disappearing (see Figure~\ref{fig-poof}).
    
    However in this same example, if the complex structure is kept fixed but the metric is perturbed, then there are still closed geodesics that survive, including length minimizers obtained as subsequential limits of length minimizing sequences of loops representing the same non-trivial homotopy class. It follows by the Bumpy Metric Theorem \cite{Abraham} that these closed geodesics are isolated after a generic metric perturbation (see Figure~\ref{fig-no-poof}). So perhaps there is good reason to believe that in the situation of Theorem~\ref{thm-main}, after a perturbation of the metric at least one of the special Lagrangian submanifolds will survive, so that the sets $\M^{\SL}(X,\omega,\Omega,L)$ will be non-empty.
    
    The method by which we prove Theorem~\ref{thm-main} is similar to the proof that the moduli spaces of $J$-holomorphic curves on a symplectic manifold $X$ are finite-dimensional manifolds for a comeagre set of almost complex structures $J$ on $X$ (see Chapter~3 of \cite{McDuffSalamon}). This Sard--Smale technique was recently used to great effect in the work of Pardon \cite{Pardon} in enumerative geometry.
    
    One can compare Theorem~\ref{thm-main} to the remarkable recent result of Windes \cite{Windes}, a transversality result for the \emph{perturbed special Lagrangian equations} (a distinct concept from the equations (\ref{eqns-SLag-pert}) in this paper). The results in \cite{Windes} imply that on a Calabi--Yau threefold, these equations reduce to the usual special Lagrangian equations, and the solutions are isolated for a generic choice of $\mathrm{SU}(3)$-structure $(\omega,\Omega)$. In contrast, in this paper the holomorphic $(3,0)$-form $\Omega$ is fixed, while the metric $\omega$ varies.
    
    \subsection{Acknowledgements}
    
    First and foremost, the author would like to express his gratitude to his advisor S\'ebastien Picard for suggesting this problem and for much valuable guidance along the way. In addition, the author is grateful to his advisor Ailana Fraser, Jim Bryan, Tristan Collins, Patrik Coulibaly, Jooho Lee, Jason Lotay, Emily Autumn Windes, and Peilin Wu for helpful discussions.
    
    The work comprising this paper is based in part on work supported by the National Science Foundation under Grant No. DMS-1928930, while the author was in residence at the Simons Laufer Mathematical Sciences Institute in Berkeley, California, during the fall of 2024.
    
    \begin{figure}
    \def\svgwidth{0.7\textwidth}
    \centering
\begingroup%
  \makeatletter%
  \providecommand\color[2][]{%
    \errmessage{(Inkscape) Color is used for the text in Inkscape, but the package 'color.sty' is not loaded}%
    \renewcommand\color[2][]{}%
  }%
  \providecommand\transparent[1]{%
    \errmessage{(Inkscape) Transparency is used (non-zero) for the text in Inkscape, but the package 'transparent.sty' is not loaded}%
    \renewcommand\transparent[1]{}%
  }%
  \providecommand\rotatebox[2]{#2}%
  \newcommand*\fsize{\dimexpr\f@size pt\relax}%
  \newcommand*\lineheight[1]{\fontsize{\fsize}{#1\fsize}\selectfont}%
  \ifx\svgwidth\undefined%
    \setlength{\unitlength}{907.08661417bp}%
    \ifx\svgscale\undefined%
      \relax%
    \else%
      \setlength{\unitlength}{\unitlength * \real{\svgscale}}%
    \fi%
  \else%
    \setlength{\unitlength}{\svgwidth}%
  \fi%
  \global\let\svgwidth\undefined%
  \global\let\svgscale\undefined%
  \makeatother%
  \begin{picture}(1,0.344649)%
    \lineheight{1}%
    \setlength\tabcolsep{0pt}%
    \put(0,0){\includegraphics[width=\unitlength,page=1]{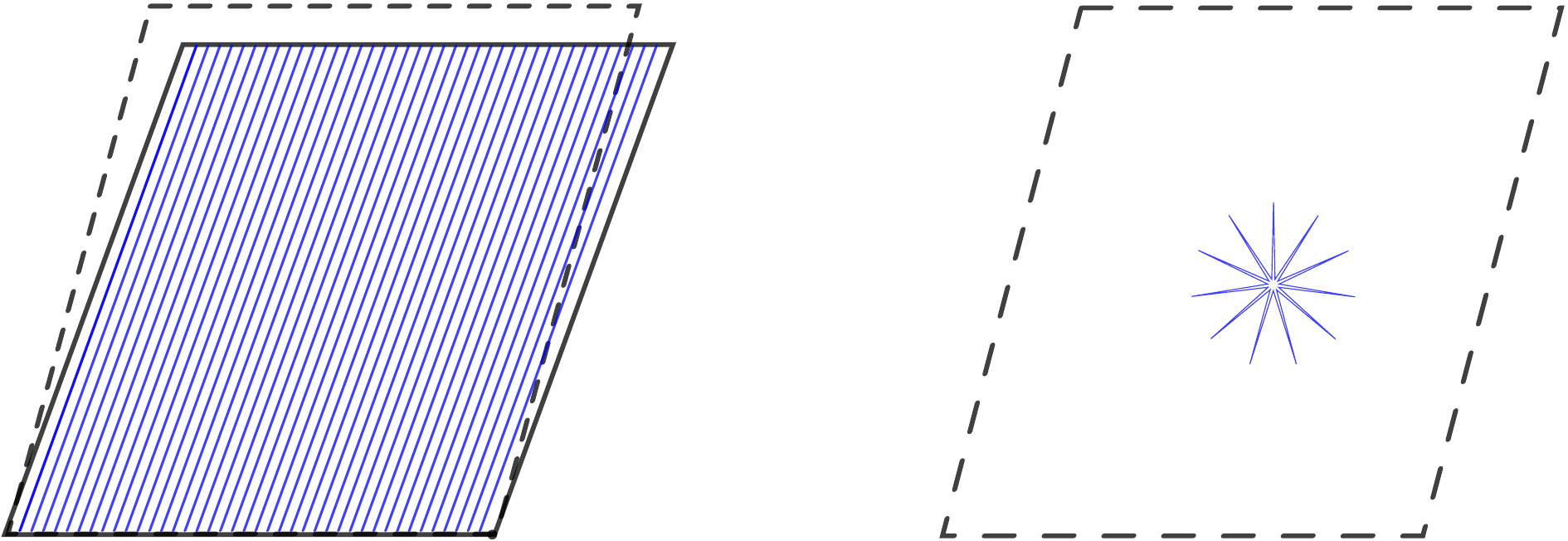}}%
    \put(0.77659608,0.10568397){\color[rgb]{0,0,1}\transparent{0.75}\rotatebox{-20.00000065}{\makebox(0,0)[lt]{\lineheight{1.10000002}\smash{\begin{tabular}[t]{l}poof!\end{tabular}}}}}%
    \put(0,0){\includegraphics[width=\unitlength,page=2]{slag_example.pdf}}%
    \put(0.3533216,0.03364839){\color[rgb]{0,0,0}\transparent{0.75}\makebox(0,0)[lt]{\lineheight{1.10000002}\smash{\begin{tabular}[t]{l}$\theta$\end{tabular}}}}%
    \put(0,0){\includegraphics[width=\unitlength,page=3]{slag_example.pdf}}%
    \put(0.4130691,0.14613553){\color[rgb]{0,0,0}\transparent{0.75}\makebox(0,0)[lt]{\lineheight{1.10000002}\smash{\begin{tabular}[t]{l}\tiny{perturbation of the}\\\tiny{complex structure}\end{tabular}}}}%
  \end{picture}%
\endgroup%

    \caption{Poof! A small perturbation of the complex structure causes the family of special Lagrangian circles with phase $e^{i\theta}$ in the Calabi--Yau torus to disappear completely.}\label{fig-poof}
\end{figure}

\begin{figure}
\def\svgwidth{0.7\textwidth}
\centering
\begingroup%
  \makeatletter%
  \providecommand\color[2][]{%
    \errmessage{(Inkscape) Color is used for the text in Inkscape, but the package 'color.sty' is not loaded}%
    \renewcommand\color[2][]{}%
  }%
  \providecommand\transparent[1]{%
    \errmessage{(Inkscape) Transparency is used (non-zero) for the text in Inkscape, but the package 'transparent.sty' is not loaded}%
    \renewcommand\transparent[1]{}%
  }%
  \providecommand\rotatebox[2]{#2}%
  \newcommand*\fsize{\dimexpr\f@size pt\relax}%
  \newcommand*\lineheight[1]{\fontsize{\fsize}{#1\fsize}\selectfont}%
  \ifx\svgwidth\undefined%
    \setlength{\unitlength}{907.08661417bp}%
    \ifx\svgscale\undefined%
      \relax%
    \else%
      \setlength{\unitlength}{\unitlength * \real{\svgscale}}%
    \fi%
  \else%
    \setlength{\unitlength}{\svgwidth}%
  \fi%
  \global\let\svgwidth\undefined%
  \global\let\svgscale\undefined%
  \makeatother%
  \begin{picture}(1,0.344649)%
    \lineheight{1}%
    \setlength\tabcolsep{0pt}%
    \put(0,0){\includegraphics[width=\unitlength,page=1]{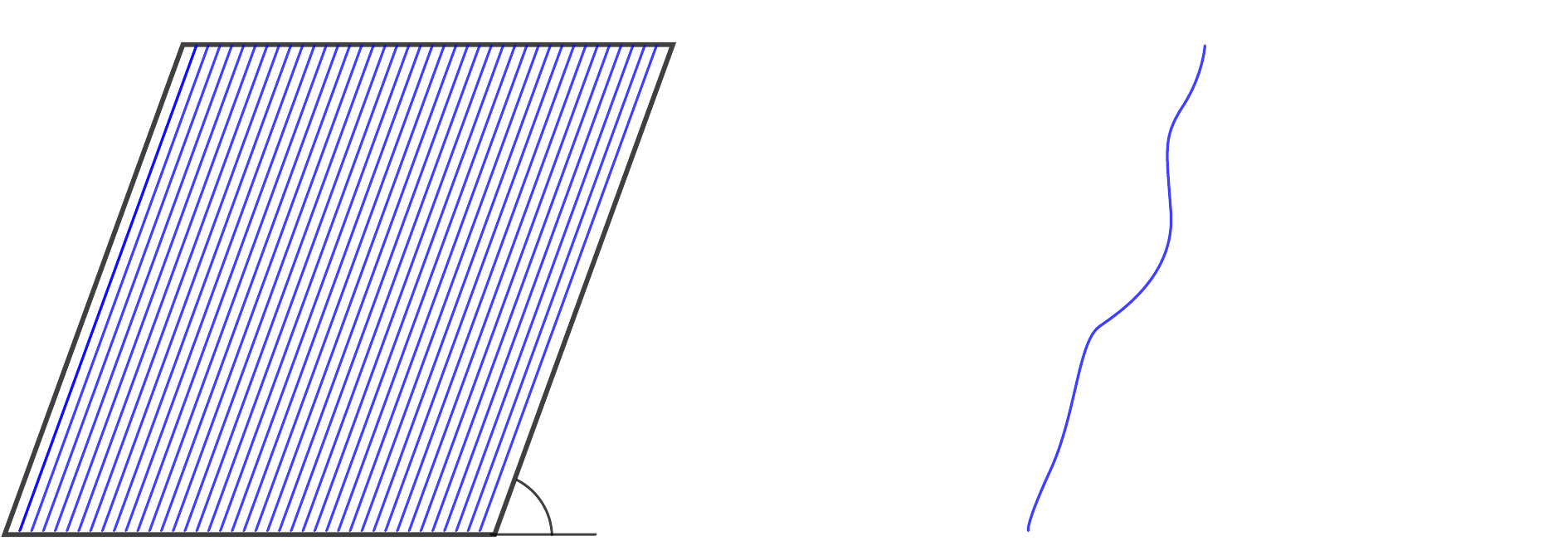}}%
    \put(0.3533216,0.03364839){\color[rgb]{0,0,0}\transparent{0.75}\makebox(0,0)[lt]{\lineheight{1.10000002}\smash{\begin{tabular}[t]{l}$\theta$\end{tabular}}}}%
    \put(0,0){\includegraphics[width=\unitlength,page=2]{slag_example2.pdf}}%
    \put(0.4130691,0.14613553){\color[rgb]{0,0,0}\transparent{0.75}\makebox(0,0)[lt]{\lineheight{1.10000002}\smash{\begin{tabular}[t]{l}\tiny{perturbation of the}\\\tiny{metric}\end{tabular}}}}%
    \put(0,0){\includegraphics[width=\unitlength,page=3]{slag_example2.pdf}}%
  \end{picture}%
\endgroup%

\caption{On the other hand, after a small perturbation of the metric, some closed geodesics will survive, but these are isolated for generic perturbations.}\label{fig-no-poof}
\end{figure}

\section{PSL submanifolds}

\subsection{The SL and PSL systems}\label{subsec-sl-psl}

Let $(X,J,\Omega)$ be a compact Calabi--Yau threefold, and suppose that $\omega \in \Herm(X)$ is a Hermitian metric on $X$. We say that $f: L^3 \to (X,J,\omega,\Omega)$ is a \textbf{special Lagrangian submanifold} (SL) if
\begin{align}
\begin{split}\label{eqns-SLag}
f^* \omega &= 0 \\
f^* \Im \Omega &= 0,
\end{split}
\end{align}
and we will say that $f: L^3 \to (X,J,\omega,\Omega)$ is a \textbf{perturbed special Lagrangian submanifold} (PSL) if for some $\rho \in \Lambda^3(L)$ with $[\rho] = 0 \in H^3(L)$, we have
\begin{align}
\begin{split}\label{eqns-SLag-pert}
f^* \omega + \d^{\dagger}_{f^*\omega} \rho &= 0 \\
f^* \Im \Omega &= 0,
\end{split}
\end{align}
where $\d^{\dagger}_{f^*\omega}$ is the $L^2$ adjoint of $\d$ with respect to $f^*g$, and $g = \omega(\cdot, J\cdot)$ is the Riemannian metric associated to $\omega$. If the PSL system (\ref{eqns-SLag-pert}) is satisfied, then we write $f: (L,\rho) \to (X,J,\omega,\Omega)$ is PSL.

\begin{remark} How does the perturbed problem (\ref{eqns-SLag-pert}) relate to the unperturbed special Lagrangian problem (\ref{eqns-SLag})? We note the following observations:
\begin{enumerate}[label=(\roman*)]
\item If $f$ is a solution of (\ref{eqns-SLag-pert}) for a $3$-form $\rho$, where $\rho$ satisfies the additional equation $\d_{f^*\omega}^{\dagger}\rho = 0$, then $f$ solves (\ref{eqns-SLag}).
\item If $f$ solves (\ref{eqns-SLag-pert}) for some $3$-form $\rho$, where the metric $\omega$ satisfies the K\"ahler condition $\d \omega=0$ and $L$ is compact, then by (\ref{eqns-SLag-pert}) we have $\d \d^{\dagger}_{f^*\omega} \rho = 0$. Therefore
\[
0 = \langle \d \d^{\dagger}_{f^*\omega} \rho, \rho \rangle_{L^2} = \langle \d^{\dagger}_{f^*\omega} \rho, \d^{\dagger}_{f^*\omega} \rho \rangle_{L^2}.
\]
So $\d^{\dagger}_{f^*\omega} \rho = 0$, and we conclude that $f$ solves (\ref{eqns-SLag}). Thus, the perturbed problem~(\ref{eqns-SLag-pert}) can be thought of as a non-K\"ahler generalization of the special Lagrangian equations, a generalization that is just as valid in the compact case as (\ref{eqns-SLag}).
\end{enumerate}
\end{remark}

In order to understand the advantage in considering the the system (\ref{eqns-SLag-pert}) instead of (\ref{eqns-SLag}), we recall that in the K\"ahler case $\d\omega=0$, the linearization of (\ref{eqns-SLag}) is given by
\[
\d\alpha + \d_{f^*\omega}^{\dagger} \alpha = 0,
\]
where $\alpha$ is a $1$-form on $L$. This linearization is a central ingredient in McLean's Theorem \cite{McLean}, since it implies that infinitesimal SL deformations correspond to harmonic $1$-forms on $L$. However in order to prove that the SL moduli space $\M^{\SL}(X,\omega,\Omega,L)$ is unobstructed, one needs the index of the operator $\d+\d_{f^*\omega}^{\dagger}$ to be finite. This works out since $\d+\d_{f^*\omega}^{\dagger}$ surjects onto the closed subspace $\d \Lambda^1(L) \oplus \d_{f^*\omega}^{\dagger} \Lambda^1(L)$ of the Hodge complex $\Lambda^*(L)$, so one can restrict the codomain of the operator so that the index is finite. However if the K\"ahler condition is relaxed, then the linearization of (\ref{eqns-SLag}) becomes
\[
\d\alpha + T(\omega) \alpha  + \d_{f^*\omega}^{\dagger}\left(|\Omega|_{\omega}\alpha\right)=0.
\]
where $T(\omega): \Lambda^1(L) \to \Lambda^2(L)$ is an additional \emph{torsion term} given by $T(\omega)\alpha = f^*(\iota_{J\alpha^{\sharp}} \d \omega)$ (see Section~2.2 of \cite{CGPY23}). The presence of the torsion term implies that the codomain of the infinitesimal SL operator cannot generally be restricted to any proper closed subspace of $\Lambda^2(L) \oplus \d_{f^*\omega}^{\dagger} \Lambda^1(L)$. However, if the component of the linearization valued in $2$-forms has an additional term of the form $\d_{f^*\omega}^{\dagger} \rho$, then the infinitesimal operator can be deformed to the operator $\d+\d_{f^*\omega}^{\dagger}$ restricted to the closed subspace $\Lambda^1(L) \oplus \Lambda^3(L)$ of $\Lambda^*(L)$, which is an operator of index $0$. This is proved in Section~\ref{subsec-inf-def}.



\subsection{Totally real geometry}\label{sec-tot-real}

A submanifold $L^3 \subset X$ is said to be \textbf{totally real} if $J(T_p L) \cap T_p L = \{0\}$ for all $p \in L$. This open condition is a natural weakening of the Lagrangian condition $f^*\omega = 0$, a closed condition. Lotay and Pacini \cite{LotayPacini} discuss the relation between these two conditions, and analyze a weak form of the Lagrangian mean curvature flow for totally real submanifolds.

\begin{lemma}\label{lem-tot-real}
Let $\norm{\d^{\dagger}_{f^*\omega}}$ denote the operator norm of
\[
\d^{\dagger}_{f^*\omega}: C^{1}\left(\Lambda^3(L)\right) \to C^{0}\left(\Lambda^2(L)\right).
\]
If $\norm{\rho}_{C^{1}} < \norm{\d^{\dagger}_{f^*\omega}}^{-1}$, then a submanifold $f: L^3 \to (X,J,\omega,\Omega)$ satisfying $f^* \omega + \d^{\dagger}_{f^*\omega} \rho = 0$ is totally real.
\end{lemma}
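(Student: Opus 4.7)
The plan is to translate total reality at each point into a pointwise operator-norm bound on $f^*\omega$, and then invoke the PSL equation together with the hypothesis on $\norm{\rho}_{C^1}$ to secure that bound; the whole argument reduces to linear algebra at a single point.

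At a fixed $p \in L$, I would introduce the tangential endomorphism $A_p : T_pL \to T_pL$ defined by $A_p v = (Jv)^T$, where the tangential/normal decomposition of $T_{f(p)}X$ is taken with respect to $g = \omega(\cdot, J\cdot)$. Since $J$ is a $g$-isometry, the Pythagorean identity
\[
|v|_g^2 \;=\; |Jv|_g^2 \;=\; |A_p v|_g^2 + |(Jv)^N|_g^2
\]
shows that $L$ is totally real at $p$, meaning $(Jv)^N \neq 0$ whenever $v \neq 0$, if and only if $\norm{A_p}_{\mathrm{op}} < 1$. Next I would relate $A_p$ to $f^*\omega$ through
\[
(f^*\omega)(v,w) \;=\; -\,g(v, Jw) \;=\; -\,g(v, A_p w), \qquad v,w \in T_pL,
\]
valid because $(Jw)^N$ is $g$-orthogonal to $v \in T_pL$. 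In any $g$-orthonormal frame of $T_pL$ the matrix of $A_p$ is thus the negative of the matrix of $f^*\omega$ as a bilinear form. Because $\dim L = 3$, that $3\times 3$ skew-symmetric matrix has singular values of the form $\sigma,\sigma,0$, and a direct computation identifies $\sigma$ with the pointwise $2$-form norm $|f^*\omega|_p$. Hence $\norm{A_p}_{\mathrm{op}} = |f^*\omega|_p$, and total reality of $L$ reduces to the pointwise bound $|f^*\omega|_p < 1$ everywhere on $L$.

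With this reformulation in hand, the PSL equation $f^*\omega = -\d^{\dagger}_{f^*\omega}\rho$ yields
\[
|f^*\omega|_p \;=\; |\d^{\dagger}_{f^*\omega}\rho|_p \;\leq\; \norm{\d^{\dagger}_{f^*\omega}\rho}_{C^0} \;\leq\; \norm{\d^{\dagger}_{f^*\omega}}\cdot\norm{\rho}_{C^1} \;<\; 1
\]
at every $p \in L$, where the final strict inequality is exactly the hypothesis of the lemma. This uniform pointwise bound completes the argument. The only potential subtlety is matching constants in the identification $\norm{A_p}_{\mathrm{op}} = |f^*\omega|_p$, which hinges on the dimension-$3$ fact that a skew-symmetric $3\times 3$ matrix has singular values $\sigma,\sigma,0$; beyond that, the proof is purely bookkeeping.
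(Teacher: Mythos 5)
Your proof is correct, and its skeleton is the same as the paper's: use the PSL equation $f^*\omega = -\d^{\dagger}_{f^*\omega}\rho$ together with the operator-norm hypothesis to get a pointwise smallness bound on $f^*\omega$, then conclude total reality by linear algebra at each point. Where you diverge is in that last linear-algebra step. The paper argues directly with angles: for unit vectors $u,v \in T_pL$ one has $|\cos\angle(Ju,v)| = |(f^*\omega)(u,v)| \leq \norm{\d^{\dagger}_{f^*\omega}}\,\norm{\rho}_{C^1} < 1$, so $Ju$ can never lie in $T_pL$ (take $v$ parallel to $Ju$ otherwise). You instead introduce the tangential endomorphism $A_p v = (Jv)^T$, characterize total reality as $\norm{A_p}_{\mathrm{op}} < 1$, and identify $\norm{A_p}_{\mathrm{op}}$ with $|f^*\omega|_p$ via the $\sigma,\sigma,0$ singular-value structure of $3\times 3$ skew matrices. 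This works, and the convention subtlety you flag is harmless: under any of the standard pointwise norms on $2$-forms one has $\norm{A_p}_{\mathrm{op}} \leq |f^*\omega|_p$, which is all the implication needs, so the exact equality (and the dimension-$3$ singular-value fact) is dispensable. The paper's route buys simplicity and dimension-independence, needing only the evaluation bound $|(f^*\omega)(u,v)| \leq \norm{f^*\omega}_{C^0}$ on unit vectors; your route buys a slightly sharper quantitative statement (an exact formula for how far $L$ is from failing to be totally real in terms of $|f^*\omega|_p$), at the cost of extra bookkeeping.
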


\begin{proof}
Let $p \in L$, and fix two unit vectors $u, v \in T_p L$. Let $\theta \in [0,\pi]$ be the smallest angle between $Ju$ and $v$. Then we have
\begin{align*}
|\cos \theta| &= |(f^* \omega)(u,v)| = |(\d^{\dagger}_{f^*\omega} \rho)(u,v)| \\
&\leq \norm{\d^{\dagger}_{f^*\omega}} \cdot \norm{\rho}_{C^{1}} < 1.
\end{align*}
Therefore $\theta \in (0,\pi)$. Since $v \in T_pL$ is arbitrary, we see that $Ju \not\in T_pL$.
\end{proof}

Motivated by Lemma~\ref{lem-tot-real}, given $\ell \in \mathbb{Z}_{\geq 0}$ and $a \in [0,1]$ we will define the set
\begin{equation}\label{3-form-tube-def}
\T_{f,\omega}^{\ell,a} := \left\{ \rho \in \Lambda^3(L) : \norm{\rho}_{C^{\ell,a}} < \norm{\d^{\dagger}_{f^* \omega}}^{-1}\right\}.
\end{equation}

Given a totally real submanifold $f: L^3 \to X$, the fact that
\[
f^* TX = TL \oplus T^{\perp} L = TL \oplus J(TL)
\]
means that the bundles $T^{\perp}L$ and $J(TL)$ are both normal bundles in the topological sense:
\begin{equation}\label{eqn-normal-bundles}
T^{\perp} L \cong J(TL) \cong f^* TX / TL.
\end{equation}

When discussing deformations of submanifolds in Riemannian geometry, usually one considers infinitesimal normal variations along the submanifold. But since both $T^{\perp}L$ and $J(TL)$ are ``normal bundles'' in the sense of (\ref{eqn-normal-bundles}), by the Tubular Neighbourhood Theorem (see e.g. Theorem 6.5 of \cite{CdS}) it suffices to consider infinitesimal deformations valued in $J(TL)$.

Given a totally real submanifold $f: L^3 \to X$, the splitting
\[
f^* TX = TL \oplus J(TL)
\]
means that for any $p \in L$ we can write any $v \in T_{f(p)}X$ uniquely as $v = u + Jw$ for some $u,w \in T_{f(p)}L$. Lotay and Pacini \cite{LotayPacini} introduce the projections $\pi_L: f^*TX \to TL$ and $\pi_J: f^*TX \to J(TL)$ where $\pi_L(v) = u$ and $\pi_J(v)=Jw$. They also note that
\[
\pi_L(Jv) = \pi_L(Ju - w) = -w = J\pi_J(v),
\]
so we have
\begin{equation}\label{eqn-projs}
\pi_L \circ J = J \circ \pi_J.
\end{equation}

In Section~\ref{subsec-linearizations}, we use the following lemma when showing surjectivity of the operator $\L$.

\begin{lemma}\label{lem-2-form-ext}
Let $(X,J,\omega)$ be a complex manifold with a Hermitian metric, and let $f: L \hookrightarrow (X,J,\omega)$ be an embedded totally real submanifold. Then a $2$-form $\eta$ on $L$ can be extended to a real $(1,1)$-form $\sigma$ on $X$. Furthermore, the pullback map
\[
f^*: C^{\ell,a}(\Lambda^{1,1}_{\R}(X)) \to C^{\ell,a}(\Lambda^2(L))
\]
is a bounded linear operator between Banach spaces.
\end{lemma}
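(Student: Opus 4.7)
The plan is to produce $\sigma$ in two stages — first extend $\eta$ off of $L$ via a tubular retraction adapted to the totally real splitting $TX|_L = TL \oplus J(TL)$, then project algebraically onto type $(1,1)$ — and to deduce the bounded-operator statement from a routine H\"older estimate for pullback by a smooth embedding. The key observation is that if the tubular fibers are chosen to be the $J(TL)$-directions, then the $(1,1)$-projection of the tubular extension loses exactly a factor of $2$ on restriction to $L$, which can be absorbed into the construction.

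Concretely, I use that $L$ is totally real of maximal real dimension, so that $TX|_L = TL \oplus J(TL)$, to build a tubular neighborhood $U$ of $L$ in $X$ whose normal fibers are modelled on $J(TL)$: the exponential map of an auxiliary Riemannian metric, restricted to $J(TL) \subset TX|_L$, yields such a $U$ together with a smooth retraction $p : U \to L$ whose differential along $L$ is the linear projection onto $TL$ along $J(TL)$. Set
\[
\tilde\eta := p^*\eta, \qquad \sigma_0 := 2\,(\tilde\eta)^{1,1},
\]
where the $(1,1)$-projection is given pointwise by $(\tilde\eta)^{1,1}(X,Y) = \tfrac{1}{2}(\tilde\eta(X,Y) + \tilde\eta(JX,JY))$. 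For $u,v \in T_pL$ viewed in $T_{f(p)}X$, $dp$ annihilates $Ju, Jv \in J(TL)$, so $\tilde\eta(Ju,Jv) = \eta(dp(Ju), dp(Jv)) = 0$ and consequently $f^*\sigma_0 = \eta$. Finally multiply by a real cutoff function $\chi : X \to [0,1]$ equal to $1$ in a neighborhood of $L$ and supported in $U$, and set $\sigma := \chi\sigma_0$, extended by zero. Multiplication by the real function $\chi$ preserves both realness and type $(1,1)$, so $\sigma$ is a real $(1,1)$-form on all of $X$ satisfying $f^*\sigma = \eta$.

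For the boundedness of pullback, $f$ is a smooth embedding of the compact manifold $L$ into $X$, so in any finite atlas adapted to $L$ the coordinate derivatives of $f$ are uniformly bounded. The pullback $f^*\sigma$ is expressed pointwise in coordinates as a finite sum of products of partial derivatives of $f$ with the components of $\sigma \circ f$; standard composition and product estimates for H\"older spaces on compact manifolds then give $\norm{f^*\sigma}_{C^{\ell,a}(L)} \leq C \norm{\sigma}_{C^{\ell,a}(X)}$, with $C$ depending only on $\ell$, $a$, and the embedding $f$.

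The one subtle step — more a careful choice than a genuine obstacle — is matching the tubular fibers to $J(TL)$ so that $dp$ annihilates the $J$-normal directions at $L$. With any other choice of tube, one would only recover a multiple of $\eta$ plus an error term that would have to be corrected by a further projection; the choice of $J(TL)$ makes the formula $\sigma = 2(p^*\eta)^{1,1}$ work cleanly, after which the rest of the argument is formal.
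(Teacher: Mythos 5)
Your proof is correct, but it is implemented differently from the paper's. The paper works with local coframes: on each patch it takes an orthonormal coframe $\varepsilon^1,\ldots,\varepsilon^n$ of $T^*L$, extends the components of $\eta$ arbitrarily, writes the candidate form in the frame $\varphi^k=\tfrac{1}{\sqrt2}(\varepsilon^k+iJ\varepsilon^k)$ so that locally $\sigma=\tfrac12\tilde\eta_{jk}\bigl(\varepsilon^j\wedge\varepsilon^k+(J\varepsilon^j)\wedge(J\varepsilon^k)\bigr)$, checks $f^*(J\varepsilon^k)=0$ via the identity $\pi_L\circ J=J\circ\pi_J$, and then glues with a partition of unity; boundedness of $f^*$ is proved with parallel transport, yielding operator norm at most $1$ for the induced metric. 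You instead produce a single global extension in one formula, $\sigma=2\chi\,(p^*\eta)^{1,1}$, where the tubular retraction $p$ is adapted so that its fibers are the $J(TL)$-directions; the geometric input is the same (the splitting $f^*TX=TL\oplus J(TL)$ kills the $J$-normal contribution on pullback, here via $dp(Ju)=0$ rather than $f^*(J\varepsilon^k)=0$), but your route avoids frame choices and the partition-of-unity gluing, and has the added virtue that $\eta\mapsto\sigma$ is manifestly a bounded linear extension operator. The two small points you compress are both fine: the tube modelled on the non-orthogonal complement $J(TL)$ exists because the exponential map restricted to that subbundle has invertible differential along the zero section (compactness of $L$ then gives an embedded tube), and your chart-based H\"older estimate for $f^*$ is standard, though it gives a constant depending on the embedding (and implicitly uses $f$ of regularity at least $C^{\ell+1}$), whereas the paper's parallel-transport argument gives the sharper bound $\norm{f^*}\leq 1$; the lemma only needs boundedness, so either suffices.
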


\begin{proof}
First, we show the result locally. Let $\varepsilon^1, \ldots, \varepsilon^n$ be a local orthonormal frame for $T^*L$ on an open coordinate patch $U \subset L$. We can write a $2$-form $\eta$ on $U$ using the local expression
\[
\eta = \frac{1}{2} \eta_{jk} \varepsilon^j \wedge \varepsilon^k,
\]
where $\eta_{jk}: U \to \R$ for $j,k \in \{1,\ldots,n\}$.

By the totally real condition, we see that $\varepsilon^1, \ldots, \varepsilon^n, J\varepsilon^1, \ldots, J\varepsilon^n$ is a local frame for the restricted bundle $T^*X \vert_U$. This frame can be extended to a local frame for $T^*X$ on an open set $\tilde{U} \subset X$ such that $\tilde{U} \cap L = U$. Using this extended frame, we construct local frames $\varphi^1, \ldots, \varphi^n$ and $\overline{\varphi}^1, \ldots, \overline{\varphi}^n$ for $\Lambda^{1,0}(X)$ and $\Lambda^{0,1}(X)$ on $\tilde{U}$, respectively, by defining
\begin{equation}\label{eqn-holc-frame-def}
\varphi^k = \frac{1}{\sqrt{2}}(\varepsilon^k + iJ\varepsilon^k), \hspace{0.5cm} \overline{\varphi}^k = \frac{1}{\sqrt{2}}(\varepsilon^k - iJ\varepsilon^k).
\end{equation}
Extend the functions $\eta_{jk}$ arbitrarily to $\tilde{\eta}_{jk} :\tilde{U} \to \R$ and consider the real $(1,1)$-form $\sigma$ on $\tilde{U}$ given by
\begin{equation}\label{eqn-ambient-form}
\sigma = \frac{1}{2} \tilde{\eta}_{jk} \left(\varphi^j \wedge \overline{\varphi}^k + \overline{\varphi}^j \wedge \varphi^k \right)
\end{equation}
Expanding (\ref{eqn-ambient-form}) using (\ref{eqn-holc-frame-def}), we obtain
\[
\sigma = \frac{1}{2} \tilde{\eta}_{jk} \left(\varepsilon^j \wedge \varepsilon^k + (J\varepsilon^j) \wedge (J\varepsilon^k)\right).
\]
Given any $p \in U$ and $v \in T_pU$, we have
\begin{align*}
f^* (J \varepsilon^k)(v) &= \varepsilon^k(J f_* v)\\
&= \varepsilon^k(\pi_L \circ J (f_* v)) \\
&= \varepsilon^k(J \circ \pi_J (f_* v)) = 0,
\end{align*}
for all $k$, where we used (\ref{eqn-projs}) in the last line. Therefore $f^* (J \varepsilon^k) = 0$, and we conclude that $f^* \sigma = \eta$.

Now, to obtain a global solution, consider a finite open cover $\{U_r\}_{r=1}^N$ by coordinate patches of $L$. In each of these open sets, extend $\eta \vert_{U_r}$ as above to a real $(1,1)$-form $\sigma_r$ on an open set $\tilde{U}_r \subset X$ such that $\tilde{U}_r \cap L = U_r$.

Now we use a partition of unity argument. Let $\{\psi_r: X \to [0,1]\}_{r=1}^N$ be a collection of smooth functions into $[0,1]$ such that $\supp \psi_r \ssubset \tilde{U}_r$ for each $r$, and
\[
\sum_{r=1}^N \psi_r(x) = 1 
\]
for all $x \in L$. We then have that
\[
\sigma := \sum\limits_{r=1}^N \psi_r \sigma_r
\]
is a real $(1,1)$-form on $X$, vanishing outside $\cup_{r=1}^N \tilde{U}_r$, such that $f^* \sigma = \eta$.

Now, to see that $f^*: C^{\ell,a}(\Lambda^{1,1}_{\R}(X)) \to C^{\ell,a}(\Lambda^2(L))$ is bounded, we first choose $\epsilon > 0$ smaller than the injectivity radii of both $L$ and $X$. Then for any two points $x,y \in L$, there is a unique length minimizing geodesic from $y$ to $x$, along with parallel transport maps $\Pi^L_{yx}, \Pi^X_{yx}$ which are linear isometries between fibres of $T^*L, T^*X$, as well as fibres of the tensor powers $(T^*L)^{\otimes k}, (T^*X)^{\otimes k}$ and their antisymmetrizations. Since the pullback $f^*$ is pointwise norm non-increasing, we have the inequalities
\begin{align*}
\norm{f^* \sigma}_{C^{\ell,a}(\Lambda^2(L))} &= \sup\limits_{0 < d_L(x,y) < \epsilon} \frac{1}{d_L(x,y)^a} \norm{f^*(\nabla^k \sigma) (x) - \Pi^L_{yx} f^*(\nabla^k \sigma)(y)} \\
&\hspace{1cm} + \sup\limits_{x \in L} \norm{(f^*\sigma)(x)} + \sup\limits_{x \in L} \norm{f^*(\nabla^k \sigma)(x)} \\
& \leq \sup\limits_{0 < d_X(f(x),f(y)) < \epsilon} \frac{1}{d_X(f(x),f(y))^a} \norm{\sigma (f(x)) - \Pi^X_{yx} \nabla^k \sigma(f(y))}  \\
&\hspace{1cm} + \sup\limits_{x \in L} \norm{\sigma(f(x))} + \sup\limits_{x \in L} \norm{\nabla^k \sigma(f(x))} \\
& \leq \norm{\sigma}_{C^{\ell,a}(\Lambda^{1,1}_{\R}(X))},
\end{align*}
so that $f^*$ is bounded, as desired. In fact, $f^*$ has norm at most $1$.
\end{proof}

\subsection{Minimality and regularity of PSL submanifolds}

Given a Hermitian metric $\omega$ on $X$, we define the scalar function $|\Omega|_{\omega} : X \to (0, \infty)$, sometimes denoted $|\Omega|_g$, by
\begin{equation}\label{eqn-om-norm-def}
|\Omega|_{\omega}^2 \frac{\omega^3}{6} = i \Omega \wedge \overline{\Omega}.
\end{equation}

Harvey and Lawson showed that special Lagrangian submanifolds are volume minimizing after a conformal transformation of the metric in order to ensure that $|\Omega|_{\omega} \equiv 1$ (see Chapter V of \cite{HarveyLawson}). As we now show, a PSL submanifold is area minimizing after a slightly more involved metric transformation.

The idea is to reduce the system (\ref{eqns-SLag-pert}) to the system (\ref{eqns-SLag}) with a transformed metric. Consider a PSL submanifold $f: L \to (X,J,\omega,\Omega)$ with associated $3$-form $\rho \in \Lambda^3(L)$. By Lemma~\ref{lem-2-form-ext}, we can extend $\d^{\dagger}_{f^*\omega} \rho$ to a real $(1,1)$-form $\beta_{\rho} \in \Lambda^{1,1}_{\R}(X)$. Let $\omega_{\rho} := \omega + \beta_{\rho}$. If $\norm{\d^{\dagger}_{f^*\omega} \rho}$ is small, then using the first isomorphism theorem for Banach spaces, one sees that $\norm{\beta_{\rho}}$ is small as well, and so the symmetric tensor
\[
g_{\rho} := \omega_{\rho}(\cdot, J\cdot)
\]
is a Riemannian metric on $X$. Note by the construction in the proof of Lemma~\ref{lem-2-form-ext} that $g_{\rho}$ is identically equal to $g$ outside a neighbourhood of $L$. Then $f$ satisfies the equations
\begin{align*}
f^* \omega_{\rho} = 0& \\
f^* \Im \Omega = 0&.
\end{align*}
I.e. $f: L \to (X, J, \omega_{\rho}, \Omega)$ satisfies (\ref{eqns-SLag}). Then making the conformal change $g_{\rho} \mapsto g'_{\rho} = |\Omega|_{g_{\rho}}^{\frac{2}{3}} g_{\rho}$, we have $|\Omega|_{g'_{\rho}} \equiv 1$. Then $f$ is a minimal submanifold with respect to $g'_{\rho}$.
%


\begin{theorem}\label{thm-ellip-reg}
If the metric $g$ is of regularity $C^{r,a}$, and $f: (L,\rho) \to (X, J, \omega, \Omega)$ is PSL, where $f$ and $\rho$ are of regularity $C^{k,a}$ for $1 \leq k \leq r$, then $f$ and $\rho$ are of regularity $C^{r+1,a}$.
\end{theorem}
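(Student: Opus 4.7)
The strategy is to recast the PSL system (\ref{eqns-SLag-pert}) as a determined first-order quasilinear elliptic PDE system on $L$, and then apply standard Schauder bootstrap.

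To begin, I would set up a local gauge near the given PSL embedding $f_0 : L \hookrightarrow X$. By the totally real property (Lemma~\ref{lem-tot-real}) and the tubular neighbourhood theorem, every embedding $f$ sufficiently close to $f_0$ may be written as $f(p) = \Psi(u(p))$ for a unique section $u$ of $J(TL_0)$, where $L_0 = f_0(L)$ and $\Psi$ is a fixed smooth tubular neighbourhood diffeomorphism. Simultaneously, fixing a smooth reference volume form $\vol_{L_0}$ on $L_0$, I would write $\rho = \tilde h \cdot \vol_{L_0}$ for a scalar function $\tilde h$ with $\int_L \tilde h \, \vol_{L_0} = 0$. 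Because $\Psi$ and $\vol_{L_0}$ are smooth, the regularity of the geometric pair $(f, \rho)$ is faithfully captured by that of the scalar pair $(u, \tilde h)$. In this gauge the PSL system becomes a first-order quasilinear PDE system
\[
\mathcal{P}(u, Du, \tilde h, D\tilde h) = 0,
\]
consisting of four scalar equations (three from $f^*\omega + \d^{\dagger}_{f^*\omega}\rho = 0$ and one from $f^*\Im\Omega = 0$) in four scalar unknowns, whose coefficients depend on $\omega \in C^{r,a}$ and smooth ambient data.

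Next, I would verify that $\mathcal{P}$ is elliptic. By the linearization analysis of Section~\ref{subsec-linearizations}, the linearization of $\mathcal{P}$ at any PSL solution has principal part that can be identified with the operator $\d + \d^{\dagger}_{f^*\omega} : \Lambda^1(L) \oplus \Lambda^3(L) \to \Lambda^0(L) \oplus \Lambda^2(L)$, whose principal symbol at $\xi \in T^*_pL \setminus 0$ is
\[
(\alpha, \sigma) \mapsto \bigl( \iota_{\xi^\sharp}\alpha, \; \xi \wedge \alpha + \iota_{\xi^\sharp}\sigma \bigr).
\]
This is a linear isomorphism because on the $3$-manifold $L$ one has the pointwise decomposition $\Lambda^2_p = (\xi \wedge \Lambda^1_p) \oplus \iota_{\xi^\sharp}\Lambda^3_p$: the $\Lambda^2$-equation uniquely determines $\sigma$ and the class of $\alpha$ in $\Lambda^1_p / \R\xi$, while the scalar equation $\iota_{\xi^\sharp}\alpha = s$ then fixes the remaining $\xi^\sharp$-component of $\alpha$.

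Finally, I would apply standard Schauder regularity for first-order quasilinear elliptic systems: given $C^{r,a}$-regular coefficients and a $C^{k,a}$ solution $(u, \tilde h)$ with $k \geq 1$, one bootstraps one derivative at a time until $(u, \tilde h) \in C^{r+1,a}$, and translating back through the smooth gauge yields $f \in C^{r+1,a}$ and $\rho \in C^{r+1,a}$. The main technical care-point is the choice of gauge: had $\rho$ been parameterized by its coefficient relative to $\vol_{f^*g}$, the regularity of $\rho$ as a differential form would be one less than that of the scalar data (since $\vol_{f^*g}$ depends on $Df$); parameterizing by the smooth reference $\vol_{L_0}$ avoids this loss, while ellipticity of $\mathcal{P}$ is unaffected because the change of parameterization involves only zeroth-order multipliers.
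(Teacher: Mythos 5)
Your overall strategy---fix a gauge, view the PSL condition as a determined quasilinear elliptic system in four scalar unknowns, and run a Schauder bootstrap---is a genuinely different route from the paper, which instead converts the PSL condition into honest minimality for a modified metric $\omega_{\rho}=\omega+\beta_{\rho}$ (via Lemma~\ref{lem-2-form-ext}), bootstraps $f$ through the harmonic map equation, and only afterwards treats $\rho$ via the first-order equation $\d^{\dagger}_{f^*\omega}\rho=-f^*\omega$. The difficulty is that your key structural claim, namely that in the gauge $\rho=\tilde h\,\vol_{L_0}$ the system has the form $\mathcal{P}(u,Du,\tilde h,D\tilde h)=0$, is not correct. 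Writing $\d^{\dagger}_{f^*g}\rho=\pm\star_{f^*g}\d\bigl(\star_{f^*g}\rho\bigr)$ and $\star_{f^*g}\bigl(\tilde h\,\vol_{L_0}\bigr)=\tilde h\,\mu(u,Du)$ with $\mu=\vol_{L_0}/\vol_{f_u^*g}$, the exterior derivative falls on $\mu$ as well, so the $2$-form equation contains the term $\tilde h\,\star_{f^*g}\d\bigl(\mu(u,Du)\bigr)$, which involves \emph{second} derivatives of $u$. Hence the gauge-fixed system is not first order in $u$, and your closing ``care-point'' is precisely where the argument breaks: the change of parameterization is zeroth order as a multiplier on $\tilde h$, but its coefficient depends on $Du$ and is differentiated inside $\d^{\dagger}$, so it does alter the principal part, and the first-order symbol you verify (which is essentially the paper's Lemma~\ref{lem-ellip} computation) is not the symbol of the system you actually wrote down.

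If you instead use the gauge you set aside, $h'=\star_{f^*g}\rho$, the system really is first-order quasilinear with the elliptic symbol you describe (with the minor extra wrinkle that for $k=1$ one starts with $h'\in C^{0,a}$ only, so the first bootstrap step needs a weak or difference-quotient formulation). But then the translation back, $\rho=h'\,\vol_{f^*g}$, costs exactly the derivative you were trying to avoid, since $\vol_{f^*g}$ is only as regular as $Df$ and $g$; this route by itself gives $\rho\in C^{r,a}$, one short of the stated conclusion. So as written the proposal proves the gain for $f$ (equivalently $u$) but not the asserted regularity of $\rho$: you need either a separate treatment of the top-degree unknown after $f$ has been upgraded---this is how the paper decouples the problem, recovering $\rho$ from the first-order equation $\d^{\dagger}_{f^*\omega}\rho=-f^*\omega$---or an honest analysis of the second-order-in-$u$ terms (for instance a Douglis--Nirenberg-type weighting of the system), neither of which is supplied.
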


\begin{proof}
If the $C^{k,a}$ norms of $g$ and $\rho$ are finite, then it follows that $\norm{g_{\rho}'}_{C^{k,a}}$ is finite as well.

Since $f$ is minimal in $(X,g_{\rho}')$, it follows that $f: (L, f^* g_{\rho}') \to (X,g_{\rho}')$ is a harmonic map, so that in local coordinates $f$ solves the equation
\[
\Delta_{f^*g'_{\rho}} f^i = - (f^* g'_{\rho})^{\alpha\beta} \Gamma(f)^i_{jk} \frac{\partial f^j}{\partial x^{\alpha}} \frac{\partial f^k}{\partial x^{\beta}},
\]
where $\Gamma^i_{jk}$ denote the Christoffel symbols of $(X,g'_{\rho})$. If $f$ is of regularity $C^{k,a}$ for some $k \geq 1$ then the right hand side of the equation is of regularity $C^{k-1,a}$. If $\norm{g_{\rho}'}_{C^{k,a}} \leq \Lambda_k$, the Schauder estimate (with zero source term) then gives
\[
\norm{f}_{C^{k+1,a}} \leq C(k,a,\Lambda_k) \cdot \norm{f}_{C^{k-1,a}}
\]
so that $f$ is of regularity $C^{k+1,a}$. It follows that $f^* \omega$ is $C^{k,a}$. Let us say that $\norm{f^* \omega}_{C^{k,a}} \leq \Lambda'_k \in (0,\infty)$. Then, since
\[
f^* \omega = -\d^{\dagger}_{f^* \omega} \rho,
\]
we have
\[
\norm{\rho}_{C^{k+1,a}} \leq C'(k,a,\Lambda'_k) \cdot \norm{\rho}_{C^{k,a}},
\]
and we see that $\rho$ is $C^{k+1,a}$. By ``bootstrapping'' -- i.e. repeating this argument inductively on $k$ -- we obtain that $f$ and $\rho$ are of regularity $C^{r+1,a}$.
\end{proof}

\begin{corollary}\label{cor-ellip-reg-smooth}
If $g$ is $C^{\infty}$ and $f: (L, \rho) \to (X, J ,\omega, \Omega)$ is PSL, then $f$ and $\rho$ are $C^{\infty}$ as well.
\end{corollary}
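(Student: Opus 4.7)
The corollary is an essentially immediate consequence of Theorem~\ref{thm-ellip-reg}, obtained by iterating it indefinitely, so the plan is a short bootstrap argument. Note that since $g$ is $C^{\infty}$, it lies in $C^{r,a}$ for every integer $r \geq 1$ and every H\"older exponent $a \in (0,1)$. I would fix such an $a$ once and for all and do the entire induction at H\"older exponent $a$, postponing the passage to $C^{\infty}$ to the very end.

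I would first record the base case. The notion of PSL submanifold carries with it an implicit baseline regularity: in order for $f^*\omega$ and $\d^{\dagger}_{f^*\omega}\rho$ to be defined pointwise and to satisfy (\ref{eqns-SLag-pert}) classically, one needs $f$ and $\rho$ to be at least $C^{1,a}$. So I would begin by noting this baseline and taking $k_0 = 1$ as the starting regularity of $f$ and $\rho$.

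Next I would run the induction: suppose, for some $k \geq 1$, that $f$ and $\rho$ are of regularity $C^{k,a}$. Since $g \in C^{k,a}$, Theorem~\ref{thm-ellip-reg} applied with $r = k$ yields that $f$ and $\rho$ are of regularity $C^{k+1,a}$. By induction on $k$, $f, \rho \in C^{k,a}$ for every $k \geq 1$, and hence $f$ and $\rho$ are $C^{\infty}$. Since the step at each level is already contained in Theorem~\ref{thm-ellip-reg}, there is no substantive obstacle to overcome — the only thing to be careful about is simply that each application of the Schauder estimate is run with the same fixed H\"older exponent $a$, so that the inductive hypothesis matches the hypotheses of the theorem at every stage.
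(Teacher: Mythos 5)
Your proposal is correct and is essentially the paper's (implicit) argument: the corollary is an immediate consequence of Theorem~\ref{thm-ellip-reg}, since a $C^{\infty}$ metric is $C^{r,a}$ for every $r$, and the theorem then upgrades the baseline $C^{1,a}$ regularity of $f$ and $\rho$ to $C^{r+1,a}$ for all $r$. Your step-by-step iteration with $r=k$ is a harmless repackaging (the theorem already contains the bootstrap from $C^{k,a}$ up to $C^{r+1,a}$), so there is nothing to add.
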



%
%
%
%

\section{Structure of the PSL moduli space}

\subsection{The parameter space and the moduli space}\label{subsec-moduli-space}


Let $X$ be a non-K\"ahler Calabi--Yau threefold, and let $L^3$ be a fixed compact manifold of real dimension $3$. The space $C^{\ell,a}(L,X)$ of $C^{\ell,a}$ H\"older maps $L \to X$ is a separable Banach manifold.\footnote{Banach manifolds, which arise often in global analysis, are infinite dimensional manifolds locally modelled on open sets in Banach spaces. We refer the interested reader to Moore's book \cite{Moore} for a detailed and readable treatment of the theory.} A Riemannian metric $g$ on $X$ is required to make sense of the H\"older condition, but since $X$ is compact the choice of metric does not affect which maps are elements of $C^{\ell,a}(L,X)$. The tangent space at $f \in C^{\ell,a}(L,X)$ is the space of $C^{\ell,a}$ vector fields along $f$:
\[
T_f C^{\ell,a}(L,X) \cong C^{\ell,a}(f^* TX).
\]

It is well known that the space $\Emb^{\ell,a}(L,X)$ of $C^{\ell,a}$ embeddings $L \to X$ is an open Banach submanifold of $C^{\ell,a}(L,X)$. A theorem of Binz and Fischer \cite{BinzFischer} states that $\Emb^{\ell,a}(L,X)$ is the total space of a principal bundle whose structure group is the group $\Diff(L)$ of diffeomorphisms of $L$, and whose base
\[
B^{\ell,a}(L,X) \cong \Emb^{\ell,a}(L,X) / \Diff(L)
\]
is a manifold consisting of all embedded $C^{\ell,a}$ submanifolds in $X$ of type $L$. The action of $\Diff(L)$ on each fibre is by reparameterization -- i.e. it is the precomposition action $f \overset{\phi}{\mapsto} f \circ \phi$ for $\phi \in \Diff(L)$. Furthermore, the base manifold $B^{\ell,a}(L,X)$ possesses a natural differentiable structure making the quotient map $\Emb^{\ell,a}(L,X) \to B^{\ell,a}(L,X)$ differentiable. If $\ell < \infty$, then $B^{\ell,a}$ is a Banach manifold.

Given the (arbitrary) choice of metric $g$ on $X$, and given $f_0 \in \Emb^{\ell,a}(L,X)$, we can form a chart for $B^{\ell,a}$ and a trivialization of $\Emb^{\ell,a}(L,X) \to B^{\ell,a}$ about the point $[f_0] = f_0 \circ \Diff(L) \in B^{\ell,a}(L,X)$ in the following way: take a tubular neighbourhood $\T$ of $f_0(L)$, with the associated projection map $\pi: \T \to f_0(L)$. The tube $\T$ is the image under a diffeomorphism of some neighbourhood $U$ of the zero-section in the normal bundle $N^{f_0}L \cong (TL)^{\perp} \subset f_0^*TX$. That diffeomorphism $\Phi: U \to \T$ can be taken to be the exponential map in $TX$ restricted to $U$:
\[
\Phi(x,\xi) = \exp_{f_0(x)} \xi, \hspace{0.5cm} (x,\xi) \in U.
\]
Let $\widehat{\mathcal{U}}^{\ell,a} \subset \Emb^{\ell,a}(L,X)$ denote the set
\[
\widehat{\mathcal{U}}^{\ell,a} := \left\{ f \in \Emb^{\ell,a}(L,X) : f(L) \subset \T\right\},
\]
and let
\[
\mathcal{U}^{\ell,a} := \widehat{\mathcal{U}}^{\ell,a} / \Diff(L) = \left\{[f] \in B^{\ell,a} : f \in \widehat{\mathcal{U}}^{\ell,a}\right\}.
\]
The key fact is that if $f \in \widehat{\mathcal{U}}^{\ell,a}$, then there exists a unique representative $f' \in [f]$ such that $f'(x) = \Phi(x,\xi_x)$ for all $x \in L$, where $\xi$ is a $C^{\ell,a}$ section of $N^{f_0}L$ with $\xi_x \in U$ for all $x \in L$. Namely, take $f' = f \circ (\pi \circ f)^{-1}$ and $\xi_x = \exp_{f(x)}^{-1} f \circ (\pi \circ f)^{-1} (x)$. The unique representative $f' \in [f]$ is called a \emph{local canonical representative} in the coordinate patch $\mathcal{U}^{\ell,a}$, and the map $[f] \to f'$ is a \emph{canonical local section} of $\Emb^{\ell,a}(L,X) \to B^{\ell,a}$. The mapping $\mathcal{U}^{\ell,a} \to U$ with $[f] = [f'] \mapsto \xi$ is a chart mapping nearby $C^{\ell,a}$ submanifolds of type $L$ to small $C^{\ell,a}$ sections of $N^{f_0} L$.
%

This discussion also shows that the tangent space $T_{[f_0]} B^{\ell,a}$ at the point $[f_0] \in B^{\ell,a}$ can be identified with $C^{\ell,a}(N^{f_0}L)$, which for $f_0:L \to X$ totally real can be expressed as
\[
T_{[f_0]} B^{\ell,a} = C^{\ell,a}\left((J \circ f_0)_*(TL)\right).
\]

%
Now, we will start to define a space of parameters for our moduli space problem. Define a set $\widehat{\mathcal{B}}^{\ell,a}$ by
\begin{align*}
\widehat{\B}^{\ell,a} := & \bigcup_{\underset{\omega \in \Herm^{\ell,a}(X)}{f \in \Emb^{\ell,a}(L,X)}} \left(\{f\} \times \left(\T^{\ell+1,a}_{f,\omega} \cap \mathcal{H}^3(L)^{\perp_{\omega}}\right) \times \{\omega\}\right) \\
& \hspace{3cm} \subset \Emb^{\ell,a}(L,X) \times  C^{\ell+1,a}\left(\Lambda^3(L)\right) \times \Herm^{\ell,a}(X).
\end{align*}
Here $\Herm^{\ell,a}(X)$ denotes the space of $C^{\ell,a}$ Hermitian metrics on $X$, and $\T_{f,\omega}^{\ell+1,a}$ is as defined in (\ref{3-form-tube-def}).
%

Since $\norm{\d_{f^*\omega}^{\dagger}}$ (see Lemma~\ref{lem-tot-real}) depends continuously on $f$ and $\omega$, it follows that $\T_{f,\omega}^{\ell+1,a}$ is an open subset of $C^{\ell+1,a}(\Lambda^3(L))$ for each $f$ and $\omega$, and hence $\T^{\ell+1,a}_{f,\omega} \cap \mathcal{H}^3(L)^{\perp_{\omega}}$ is an open subset of $\mathcal{H}^3(L)^{\perp_{\omega}}$. Thus $\widehat{\B}^{\ell,a}$ is a Banach submanifold of codimension $b_3(L)$ in the product Banach manifold $C^{\ell,a}(L,X) \times C^{\ell,a}\left(\Lambda^3(L)\right) \times \Herm^{\ell,a}(X)$. We define $\widehat{\B}^{\ell,a}$ in this way so that if $(f,\rho,\omega) \in \widehat{\mathcal{B}}^{\ell,a}$ and $f^* \omega + \d^{\dagger}_{f^*\omega} \rho=0$, then $f$ is totally real, by Lemma~\ref{lem-tot-real}, and the reason that we take the second factor to be the intersection $\T^{\ell+1,a}_{f,\omega} \cap \mathcal{H}^3(L)^{\perp_{\omega}}$ is that we do not wish to distinguish between $\rho$ and $\rho'$ when $\d^{\dagger}_{f^* \omega} \rho = \d^{\dagger}_{f^*\omega} \rho'$.
%

Since the set $\T_{f,\omega}$ is defined in terms of the norm $\norm{\d^{\dagger}_{f^* \omega}}$, and we have
\[
\norm{\d^{\dagger}_{f^* \omega}}_{f^*{\omega}} = \norm{\d^{\dagger}_{(f \circ \phi)^* \omega}}_{(f \circ \phi)^* \omega}
\]
for all $\phi \in \Diff(L)$, we observe that $\T_{(f\circ \phi),\omega} = \T_{f,\omega}$. I.e. the set depends only on the $\Diff(L)$-orbit of $f$, and we can denote the set by $\T_{[f],\omega}$. Therefore we can consider a well-defined action of $\Diff(L)$ of $L$ on $\widehat{\B}^{\ell,a}$ by
\[
(f, \rho, \omega) \overset{\phi}{\mapsto} (f \circ \phi, \phi^*\rho, \omega).
\]


We define the \textbf{parameter space} $\B^{\ell,a}$ to be the quotient of $\widehat{\B}^{\ell,a}$ by this group action:
\begin{align*}
\B^{\ell,a} &:= \widehat{\B}^{\ell,a} / \Diff(L) \\
&\cong \bigcup_{\underset{\omega \in \Herm^{\ell,a}(X)}{[f] \in B^{\ell,a}(L,X)}} \left(\{[f]\} \times \left(\T^{\ell+1,a}_{[f],\omega} \cap \mathcal{H}^3(L)^{\perp_{\omega}}\right) \times \{\omega\}\right).
\end{align*}

Let $\widehat{\M}^{\ell,a} \subset \B^{\ell,a}$ denote the set of all triples $(f,\rho,\omega) \in \widehat{\B}^{\ell,a}$ such that $f: (L,\rho) \to (X,J,\omega,\Omega)$ is PSL. Observe that if $(f,\rho,\omega) \in \widehat{\M}^{\ell,a}$ and $\phi \in \Diff(L)$, then $(f \circ \phi, \phi^* \rho, \omega) \in \widehat{\M}^{\ell,a}$ as well. This follows by a calculation
\begin{align*}
&(f \circ \phi)^* \omega = \phi^* f^* \omega = -\phi^* \d^{\dagger}_{f^* \omega} \rho = -\d^{\dagger}_{(f \circ \phi)^* \omega} \phi^* \rho,   \\
& (f \circ \phi)^* \Im \Omega = \phi^* f^* \Im \Omega = 0.
\end{align*}

Thus, $\widehat{\M}^{\ell,a}$ is $\Diff(L)$-saturated in $\widehat{\B}^{\ell,a}$, and it makes sense to talk about its image $\M^{\ell,a}$ in the quotient $\B^{\ell,a}$. We refer to $\M^{\ell,a}$ as the \textbf{universal moduli space}, in analogy with the terminology for pseudoholomorphic curves (see \cite{McDuffSalamon}). On the other hand, for $\omega \in \Herm^{\ell,a}(X)$, we will let $\M_{\omega}$ denote the moduli space of $C^{\ell,a}$ PSL submanifolds $f: (L,\rho) \to (X,J,\omega,\Omega)$. I.e. $\M_{\omega}$ is the set of $[f,\rho]$ such that $\left([f,\rho],\omega\right) \in \M^{\ell,a}$.
%

%

\subsection{Reduction to the local picture}\label{subsec-linearizations}

It will be convenient to write down a local chart for the parameter space $\B^{\ell,a}$. Let $([f_0,\rho_0],\omega_0) \in \B^{\ell,a}$. In the last section, we described how to construct a chart $\U^{\ell,a} \subset B^{\ell,a}$ about $[f_0]$ by identifying nearby embedded submanifolds in $X$ with small sections of the normal bundle along $f_0$. Define a set $\mathcal{V}^{\ell,a} \subset \B^{\ell,a}$ by
\begin{equation}\label{def-chart-holder}
\mathcal{V}^{\ell,a} := \bigcup_{\underset{\omega \in \Herm^{\ell,a}(X)}{[f] \in \mathcal{U}^{\ell,a}}} \left(\{[f]\} \times \left(\T^{\ell,a}_{[f],\omega} \cap \mathcal{H}^3(L)^{\perp}\right) \times \{\omega\}\right).
\end{equation}
The open set $\mathcal{V}^{\ell,a} \subset \B^{\ell,a}$ is a chart in $\B^{\ell,a}$ whose coordinate map is the coordinate map previously defined on $\mathcal{U}^{\ell,a}$ times the projection maps onto the $C^{\ell,a}(\mathcal{H}^3(L)^{\perp})$ and $\Herm^{\ell,a}(X)$ factors.

Consider a point $P_0 = ([f_0,\rho_0],\omega_0) \in \M$ (i.e. we have a PSL submanifold $f_0: (L, \rho_0) \to (X,J,\omega_0,\Omega)$), and the chart $\mathcal{V}^{\ell,a}$ about $P_0$ as described above. For all $\alpha$ in a small neighbourhood $\T_1$ of the zero section in $C^{\ell,a}(\Lambda^1(L))$, there is a well-defined deformation $f_{\alpha} : L \to X$ given by
\[
f_{\alpha}(x) = \exp_{f_0(x)} J f_* \alpha^{\sharp_0}(x),
\]
where $\exp$ and $\sharp_0$ are taken with respect to $\omega_0$. The mapping $f_{\alpha} \mapsto \alpha$ is none other than the chart on $\mathcal{U}^{\ell,a}$ described above, and the mapping $[f_{\alpha}] \to f_{\alpha}$ is the canonical local section in the chart $\mathcal{U}^{\ell,a}$.

Now, consider the Banach vector bundle $\Y \to \mathcal{V}^{\ell,a}$ whose fibre over a point $(\alpha,\rho,\omega) \in \mathcal{V}^{\ell,a}$ is given by
\[
\Y_{(\alpha,\rho,\omega)} := C^{\ell-1,a}\left(\Lambda^2(L)\right) \oplus \d_{f_{\alpha}^*\omega}^{\dagger}C^{\ell,a}\left(\Lambda^1(L)\right).
\]
For convenience, let $\Y$ denote the fibre over $(f_0,\rho_0,\omega_0)$:
\begin{equation}\label{Y-def}
\Y :=C^{\ell-1,a}\left(\Lambda^2(L)\right) \oplus \d_0^{\dagger}C^{\ell,a}\left(\Lambda^1(L)\right).
\end{equation}
Here and elsewhere $\d_0^{\dagger} := \d^{\dagger}_{f_0^*\omega_0}$. Let $s = s_{\mathcal{V}}^{\ell,a}: \mathcal{V}^{\ell,a} \to \Y$ be the section of $\Y$ given by
\begin{align}
\begin{split}\label{eqn-sec-def}
&\news(\alpha,\rho,\omega)= (f_{\alpha}^* \omega + \d^{\dagger}_{\omega} \rho) + \star_{\omega} f_{\alpha}^* \Im \Omega \\
&\hphantom{\news(\alpha,\rho,\omega)} =: \newstwo + \newszero.
\end{split}
\end{align}

In order to see that $s$ is valued in $\Y$, the only thing to check is that the function $\star_{\omega} f_{\alpha}^* \Im \Omega$ is a co-exact $0$-form on $L$. This follows by a standard homotopy argument. Since $f_0^* \Im \Omega = 0$ and $\d\left(\Im \Omega\right)=0$, we see that
\begin{align*}
f_{\alpha}^* \Im \Omega &= \int_0^1 \left. \frac{\d}{\d t} \right\vert_{t=0} f_{t \alpha}^* \Im \Omega\,\d t = \int_0^1 \mathscr{L}_{J ((f_{t\alpha})_*\alpha)^{\sharp_0}} \Im \Omega\,\d t  \\
&= \int_0^1 \d \left(\iota_{J ((f_{t\alpha})_*\alpha)^{\sharp}} \Im \Omega \right) \,\d t = \d \hspace{-0.1cm}\int_0^1 \left(\iota_{J ((f_{t\alpha})_*\alpha)^{\sharp}} \Im \Omega \right)\,\d t,
\end{align*}
so that $f_{\alpha}^* \Im \Omega$ is exact, and hence $\star_{\omega} f_{\alpha}^* \Im \Omega$ is co-exact.

The \textbf{local universal moduli space} is the set $\M(\mathcal{V}^{\ell,a}) = \M^{\ell,a} \cap \mathcal{V}^{\ell,a}$. The reason we defined the bundle $\Y^{\ell,a}$ and the section $s$ above is so that we can write the local universal moduli space as the set of zeroes of $s$:
\[
\M(\mathcal{V}^{\ell,a}) = s^{-1}(0).
\]
Given $\omega \in \Herm^{\ell,a}(X)$, we define $\M_{\omega}(\mathcal{V}^{\ell,a})$ to be the set of $(f_{\alpha},\rho)$ such that $(f_{\alpha},\rho,\omega) \in \M(\mathcal{V}^{\ell,a})$. In section~\ref{subsec-transvers}, we will prove the following local theorem of the main theorem:
\begin{theorem}[Local version of Theorem~\ref{thm-main}, $C^{\ell,a}$ parameters]\label{thm-main-loc-holder}
There is a comeagre set of Hermitian metrics $\Herm^{\ell,a}_{\reg,\mathcal{V}^{\ell,a}}(X) \subset \Herm^{\ell,a}(X)$ such that for all $\omega \in \Herm^{\ell,a}_{\reg,\mathcal{V}^{\ell,a}}(X)$, the set $\M_{\omega}(\mathcal{V}^{\ell,a})$ consists of isolated points.
\end{theorem}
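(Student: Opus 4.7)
The plan is to apply the Sard--Smale theorem to the projection $\pi: \M(\mathcal{V}^{\ell,a}) \to \Herm^{\ell,a}(X)$. I will need three ingredients: (i) the section $\news$ from (\ref{eqn-sec-def}) is transverse to the zero section, so that $\M(\mathcal{V}^{\ell,a}) = \news^{-1}(0)$ is a Banach submanifold of $\mathcal{V}^{\ell,a}$; (ii) the restriction $\pi|_{\M}$ is a Fredholm map of index $0$; and (iii) $\news$ is smooth enough to invoke Sard--Smale. Together these yield a comeagre set of regular values in $\Herm^{\ell,a}(X)$, over each of which the fibre of $\pi|_{\M}$ is a $0$-dimensional manifold, i.e.\ a discrete set of isolated points.

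First I would compute $D\news$ at a zero $P_0 = ([f_0,\rho_0],\omega_0) \in \M$ in the three variables $(\dot\alpha,\dot\rho,\dot\omega)$, following and extending the linearization of \cite{CGPY23}. Writing $\d_0^\dagger := \d^\dagger_{f_0^*\omega_0}$, the $\Lambda^2(L)$-component $\newstwo$ linearizes to
\[
\d\dot\alpha \;+\; T(\omega_0)\dot\alpha \;+\; \d_0^\dagger\dot\rho \;+\; f_0^*\dot\omega \;+\; (\text{terms of lower order in } \rho_0),
\]
where the final terms arise because $\d^\dagger_\omega$ itself depends on $\omega$ through the induced metric, and hence on both $\dot\alpha$ and $\dot\omega$. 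Meanwhile $\newszero$ linearizes essentially to $\d_0^\dagger(|\Omega|_{\omega_0}\dot\alpha)$; crucially, at $P_0$ the derivative of $\newszero$ in the direction $\dot\omega$ vanishes, since $f_0^*\Im\Omega = 0$ and so any $\omega$-dependence enters through a quantity that is zero at the base point.

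The heart of the argument, and the step I expect to be the main obstacle, is surjectivity of $D\news$ at every zero. Given a target $(\eta,\phi) \in \Y$, I would first solve $\d_0^\dagger(|\Omega|_{\omega_0}\dot\alpha) = \phi$ for $\dot\alpha \in C^{\ell,a}(\Lambda^1(L))$; this is possible because $|\Omega|_{\omega_0}$ is strictly positive (so multiplication by it is a Banach-space isomorphism of $C^{\ell,a}(\Lambda^1(L))$) and $\d_0^\dagger C^{\ell,a}(\Lambda^1(L))$ is by construction the codomain of $\newszero$. Writing $\zeta$ for the resulting $\dot\alpha$-contribution to $\newstwo$, I would invoke Lemma~\ref{lem-2-form-ext} to extend $\eta - \zeta$ to a real $(1,1)$-form $\beta$ on $X$ with $f_0^*\beta = \eta - \zeta$, set $\dot\omega = \beta$ (a valid tangent vector, since $\Herm^{\ell,a}(X)$ is open in the space of real $(1,1)$-forms), and take $\dot\rho = 0$. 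The chief delicacy is absorbing the $\rho_0$-dependent lower-order corrections coming from the $\omega$-sensitivity of $\d^\dagger_{f_\alpha^*\omega}\rho_0$; these form a compact perturbation of the leading term $f_0^*\dot\omega$ and can be handled by an open-mapping/Fredholm perturbation argument, but careful bookkeeping in the H\"older scale, using the boundedness statement of Lemma~\ref{lem-2-form-ext}, is required.

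Finally, by the standard universal-moduli-space principle (cf.\ \cite[Lemma~A.3.6]{McDuffSalamon}), the Fredholm index of $\pi|_{\M}$ equals the index of the vertical operator obtained by fixing $\dot\omega = 0$. By the discussion at the end of Section~\ref{subsec-sl-psl}, this vertical operator is a compact perturbation of the restriction of $\d + \d^\dagger$ to $\Lambda^1(L) \oplus \Lambda^3(L) \to \Lambda^2(L) \oplus \Lambda^0(L)$. A direct Hodge-theoretic calculation gives $\ker \cong \mathcal{H}^1(L)$ and $\coker \cong \mathcal{H}^2(L)$, so by Poincar\'e duality the index is $b_1(L) - b_2(L) = 0$. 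Applying Sard--Smale with $\ell$ chosen large enough that $\news$ is sufficiently differentiable then produces the desired comeagre set $\Herm^{\ell,a}_{\reg,\mathcal{V}^{\ell,a}}(X)$, over which $\M_\omega(\mathcal{V}^{\ell,a})$ is a $0$-manifold, completing the proof.
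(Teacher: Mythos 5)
Your proposal follows essentially the same route as the paper: Sard--Smale applied to the projection $\pi:\M(\mathcal{V}^{\ell,a})\to\Herm^{\ell,a}(X)$; surjectivity of the full linearization by first solving the $0$-form equation with $\dot\alpha=|\Omega|_{\omega_0}^{-1}\tau$, then extending the leftover $2$-form via Lemma~\ref{lem-2-form-ext} and taking $\dot\rho=0$; the identifications $\ker \mathrm{D}\pi\cong\ker\L_1$ and $\coker \mathrm{D}\pi\cong\coker\L_1$ as in Lemma~A.3.6 of \cite{McDuffSalamon}; and the Hodge-theoretic computation giving index $0$. However, two of your justifications would fail as stated. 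First, the vertical operator $\L_1$ is \emph{not} a compact perturbation of the restricted Hodge operator $\d+\d^{\dagger}_0$ on $\Lambda^1(L)\oplus\Lambda^3(L)$: while $T(\omega_0)$ is zeroth order, the term $\d^{\dagger}_0\circ\mathrm{mult}_{|\Omega|_{\omega_0}}$ differs from $\d^{\dagger}_0$ by a first-order operator with coefficient $|\Omega|_{\omega_0}-1$, i.e.\ it changes the principal symbol, so compactness is unavailable. The paper instead deforms through the path $\L_1^t$ (with $|\Omega|_{\omega_0}^t$ and $tT(\omega_0)$), checks that the symbol remains an isomorphism for every $t$, and uses invariance of the Fredholm index along a continuous family of elliptic operators.

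Second, the $\rho_0$-dependent correction $-\left.\frac{\partial}{\partial\omega}\right|_{\omega_0}\left(\d^{\dagger}_{f_0^*\omega}\rho_0\right)\dot\omega$ cannot be absorbed by a compactness argument either: $\L$ has infinite-dimensional kernel and is not Fredholm, and a compact perturbation of a surjective operator in this setting need not remain surjective. What actually rescues the step is smallness, not compactness: $\rho_0$ lies in the tube $\T^{\ell,a}_{f_0,\omega_0}$, which is defined precisely by an operator-norm bound, so the correction term has small operator norm; since being a submersion is an open condition (surjectivity with a bounded right inverse is stable under small perturbations), it suffices to verify surjectivity when $\d^{\dagger}_0\rho_0=0$, which is exactly how the paper proceeds. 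Relatedly, for the Banach-manifold structure of $\news^{-1}(0)$ you need not only surjectivity of $\L$ but also that $\ker\L$ splits the tangent space; the paper obtains this by building a bounded right inverse of $\L$ out of the Fredholm property of $\L_1$, a point your outline leaves implicit and which should be spelled out before invoking the regular value theorem.
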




In order to understand the structure of the universal moduli space, we would first like to understand the deformation theory of the PSL system. To that end, we consider the linearization
\[
\L := (\mathrm{D}s)_{P_0},
\]
which is a linear map from the tangent space $T_{P_0} \mathcal{V}^{\ell,a}$ to $\Y_{P_0}$ (\ref{Y-def}). Since $f_0: L \to X$ is totally real, there is an isomorphism from the cotangent bundle $\Lambda^1(L)$ to the normal bundle $J f_*(TL)$ given by $\alpha \mapsto Jf_* \alpha^{\sharp_0}$, where the sharp operator $\sharp_0$ is defined with respect to $\omega_0$. We can then conclude that, the tangent space $T_{P_0}\mathcal{V}^{\ell,a}$ is given by
\begin{equation}\label{X-def}
\X := C^{\ell,a}(\Lambda^1(L)) \oplus C^{\ell,a}\left(\mathcal{H}^3(L)^{\perp_{\omega_0}} \right) \oplus C^{\ell,a}(\Lambda^{1,1}_{\R}(X)).
\end{equation}

To avoid confusion, elements of one of the tangent spaces will be denoted by symbols that are decorated with a dot. E.g. $\dot\omega$ will denote an element of $C^{\ell,a}(\Lambda^{1,1}_{\R}(X)) \cong T_{\omega_0} \Herm^{\ell,a}(X)$.

Thus $\L$ is a map
\begin{equation}\label{L-dom-codom}
\L : \underset{\X}{\underbrace{C^{\ell,a}(\Lambda^1(L)) \oplus C^{\ell,a}\left(\mathcal{H}^3(L)^{\perp_{\omega_0}} \right) \oplus C^{\ell,a}(\Lambda^{1,1}_{\R}(X))}} \to \underset{\Y}{\underbrace{C^{\ell-1,a}\left(\Lambda^2(L)\right) \oplus \d_0^{\dagger}C^{\ell,a}\left(\Lambda^1(L)\right)}}
\end{equation}
%

Since there are natural splittings of the domain (\ref{X-def}) and codomain (\ref{Y-def}) of $\L$ into three subspaces and two subspaces, respectively, it will be convenient to write $\L$ as a $2 \times 3$ formal Jacobian matrix
\[
\L= \begin{pmatrix}
\dfrac{\partial \newstwo}{\partial \alpha}(P_0) & \dfrac{\partial \newstwo}{\partial \rho}(P_0) & \dfrac{\partial \newstwo}{\partial \omega}(P_0) \\
 & \\
\dfrac{\partial \newszero}{\partial \alpha}(P_0) & \dfrac{\partial \newszero}{\partial \rho}(P_0) & \dfrac{\partial \newszero}{\partial \omega}(P_0)
\end{pmatrix}.
\]

We compute $\L$ in the following lemma (similar to Lemma~2.2 in \cite{CGPY23})
\begin{lemma}\label{lem-compute-L}
The formal Jacobian matrix is given by
\[
\L= \begin{pmatrix}
-\d +T_0 & -\d^{\dagger}_0 & f_0^* \\
 & & & \\
\d^{\dagger}_0 \circ \mathrm{mult}_{|\Omega|_{\omega_0}} & 0 & 0
\end{pmatrix}
+ \begin{pmatrix}
0 & 0 & -\left.\frac{\partial}{\partial \omega} \right|_{\omega_0} \left(\d^{\dagger}_{f_0^*\omega} \rho_0\right) \\
& & \\
0 & 0 & 0
\end{pmatrix},
\]
where the map $T_0: \Lambda^1(L) \to \Lambda^2(L)$ is defined by 
\[
T_0\alpha = f_0^*(\iota_{J\alpha^{\sharp}} \d \omega_0),
\]
and the action of $\L$ on a formal column vector $\begin{pmatrix} \alpha \\ \dot{\rho} \\ \dot{\omega} \end{pmatrix}$ is given by
\[
\L
\begin{pmatrix}
\alpha \\
\dot{\rho} \\
\dot{\omega}
\end{pmatrix} = 
\begin{pmatrix}
-\d\alpha  + T_0\alpha -\d^{\dagger}_0 \dot{\rho} + f_0^* \dot{\omega} -\left.\frac{\partial}{\partial \omega} \right|_{\omega_0} \left(\d^{\dagger}_{f_0^*\omega} \rho_0\right) \dot\omega \\
\\
\d^{\dagger}_0 (|\Omega|_{\omega_0} \alpha)
\end{pmatrix}.
\]
\end{lemma}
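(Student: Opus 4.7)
The proof is a direct computation of the six partial derivatives of the section $s = \newstwo + \newszero$ at $P_0 = (0,\rho_0,\omega_0)$. The main tool throughout is Cartan's magic formula applied to the infinitesimal normal variation $V := J f_{0*}\alpha^{\sharp_0}$: since $f_{t\alpha} = \exp_{f_0}(tV)$, we have
\[
\left.\frac{\d}{\d t}\right|_{t=0} f_{t\alpha}^* \eta \;=\; f_0^* \mathscr{L}_V \eta \;=\; \d(f_0^*\iota_V \eta) + f_0^*\iota_V\, \d\eta
\]
for any form $\eta$ on $X$.

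The left column follows from two applications of this formula. Taking $\eta = \omega_0$, the identity $\omega_0(JX,Y) = -g_0(X,Y)$ (a consequence of $g_0 = \omega_0(\cdot, J\cdot)$) together with the definition of $\sharp_0$ with respect to $f_0^*g_0$ gives $f_0^*\iota_V\omega_0 = -\alpha$, whose exterior derivative is $-\d\alpha$; the remaining term $f_0^*\iota_V \d\omega_0$ is precisely $T_0\alpha$ by definition of the torsion operator, yielding the top-left entry $-\d + T_0$. Taking $\eta = \Im\Omega$, the closedness $\d\Im\Omega=0$ leaves only $\d(f_0^*\iota_V\Im\Omega)$. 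A local frame computation of the type used in the proof of Lemma~\ref{lem-2-form-ext} --- expressing $\Omega$ in the $(1,0)$-frame $\varphi^k = (\varepsilon^k + iJ\varepsilon^k)/\sqrt{2}$ built from an orthonormal frame $\{e_k\}$ of $(T_pL, f_0^*g_0)$ --- computes $f_0^*\iota_V\Im\Omega$ in terms of $|\Omega|_{\omega_0}$ and the Hodge-dual $\star_L \alpha$. Combining with the 3-dimensional identity $\star_L \d \star_L = \pm \d^{\dagger}_0$ on 1-forms then produces the bottom-left entry $\d^{\dagger}_0 \circ \mathrm{mult}_{|\Omega|_{\omega_0}}$.

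The remaining four entries are essentially bookkeeping. The $\rho$-dependence of $\newstwo$ is the single linear term $\d^{\dagger}_{f_0^*\omega_0}\rho$, giving the top-middle entry $\pm\d^{\dagger}_0$, while $\newszero$ has no $\rho$-dependence, giving $0$ in the bottom-middle. The $\omega$-derivative of $\newstwo$ splits by the product rule into the contribution $f_0^*\dot\omega$ from $f_\alpha^*\omega$ and the correction $\left.\tfrac{\partial}{\partial\omega}\right|_{\omega_0}(\d^{\dagger}_{f_0^*\omega}\rho_0)\,\dot\omega$ from the codifferential, accounting together for the top-right entry. The bottom-right entry vanishes: by the product rule, $\partial_\omega(\star_\omega f_0^*\Im\Omega)\,\dot\omega$ equals $(\partial_\omega \star_\omega\,\dot\omega)(f_0^*\Im\Omega)$, which is zero since $f_0^*\Im\Omega = 0$ by the PSL condition.

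The principal technical step is the calibration identity producing the bottom-left entry. Its derivation mirrors McLean's original Lagrangian computation but must accommodate two features of the present setting: (a) the magnitude $|\Omega|_{\omega_0}$ is not constant in the non-K\"ahler case, which is responsible for the multiplication operator appearing before $\d^{\dagger}_0$; and (b) $f_0$ is only totally real rather than Lagrangian --- since in general $f_0^*\omega_0 = -\d^{\dagger}_0\rho_0 \neq 0$ --- so the frame calculation must be performed in a basis adapted to the splitting $f_0^*TX = TL \oplus J(TL)$ furnished by the totally real condition (cf. Section~\ref{sec-tot-real}), rather than an orthonormal basis of a Lagrangian tangent space.
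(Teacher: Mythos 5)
Your proposal is correct and takes essentially the same route as the paper: the $\alpha$-derivatives are computed via the Lie derivative/Cartan formula using $\iota_{J\alpha^{\sharp_0}}\omega_0=-\alpha$ for the top-left entry and a local frame/coordinate computation adapted to the totally real splitting (with $\Omega=|\Omega|_{\omega_0}\,\d z^1\wedge\d z^2\wedge\d z^3$) for the $\d^{\dagger}_0\circ\mathrm{mult}_{|\Omega|_{\omega_0}}$ entry, while the remaining entries are read off directly. Your explicit observation that the $\omega$-derivative of $\star_{\omega}f_0^*\Im\Omega$ vanishes because $f_0^*\Im\Omega=0$ is just a more careful phrasing of what the paper dismisses as evident.
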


\begin{proof}
We will first show the linearization
\[
\dfrac{\partial f_{\alpha}^* \omega}{\partial \alpha}(P_0) = -\d\alpha  + T_0\alpha.
\]
We have:
\begin{align*}
\dfrac{\partial f_{\alpha}^* \omega}{\partial \alpha}(P_0) &= \left. \frac{\d}{\d t} \right\vert_{t=0} f_{t\alpha}^* \omega_0 = f_0^* (\mathscr{L}_{(J \circ \sharp_0) \alpha}  \omega_0) \\
&= f_0^* (\iota_{(J \circ \sharp_0) \alpha} \d \omega_0 + \d(\iota_{(J \circ \sharp_0) \alpha} \omega_0)) \\
&= T_0\alpha - \d\alpha ,
\end{align*}
where the last equality follows from the definition of $T_0$ and the computation
\begin{align*}
\iota_{J \alpha^{\sharp_0}} \omega_0 = -\omega_0(\cdot, J\alpha^{\sharp_0}) = -g_0(\cdot,\alpha^{\sharp_0}) = -\alpha.
\end{align*}
Now in order to show that
\[
\dfrac{\partial f_{\alpha}^* \Im \Omega}{\partial \alpha}(P_0) = \d^{\dagger}_0(|\Omega|_{\omega_0} \alpha),
\]
let $x^1,x^2,x^3,y^1,y^2,y^3$ be local real coordinates for a coordinate patch $U$ in $X$ intersecting $f_0(L)$. Since $f_0: L \to X$ is totally real, we can assume that $U \cap f_0(L) = \{y^1=y^2=y^3=0\}$ and that $J\frac{\partial}{\partial x^i} = \frac{\partial}{\partial y^i}$ along $U \cap f_0(L)$ for $i=1,2,3$. Furthermore we have
\begin{align*}
\Omega &= |\Omega|_{\omega_0} \d z^1 \wedge \d z^2 \wedge \d z^3 \\
&=|\Omega|_{\omega_0} (\d x^1 +i\d y^1) \wedge (\d x^2 +i\d y^2) \wedge (\d x^3 +i\d y^3),
\end{align*}

It suffices to prove the linearization formula when $\alpha = \d x^i$, which we can then extend by linearity. Thus
\begin{align}
\begin{split}\label{eqn-ImOm-coord}
&\Im \Omega = |\Omega|_{\omega_0} \left(\d y^1 \wedge \d x^2 \wedge \d x^3 + \d x^1 \wedge \d y^2 \wedge \d x^3\right. \\
&\hspace{2.5cm}  \left.+ \d x^1 \wedge \d x^2 \wedge \d y^3 -\d y^1 \wedge \d y^2 \wedge \d y^3\right).
\end{split}
\end{align}
Since $\Im \Omega$ is a closed $3$-form, we have
\begin{align*}
\left. \frac{\d}{\d t} \right\vert_{t=0} f_{t\d x^i}^* \Im \Omega &= f_0^*\d(\iota_{\frac{\partial}{\partial y^i}} \Im \Omega).
\end{align*}
Substituting the expression (\ref{eqn-ImOm-coord}), we see that
\[
\left. \frac{\d}{\d t} \right\vert_{t=0} f_{t\d x^i}^* \Im \Omega = \d \star_0 \left(|\Omega|_{\omega_0} \d x^i\right),
\]
so that
\[
\left. \frac{\d}{\d t} \right\vert_{t=0} \star_0 f_{t\d x^i}^* \Im \Omega = \d^{\dagger}_0 \left(|\Omega|_{\omega_0} \d x^i\right).
\]
The remaining entries of $\L$ are evident, since the corresponding terms in $\news$ are linear.
\end{proof}
%

\subsection{Infinitesimal PSL deformations}\label{subsec-inf-def}
In addition to $\L$, we also consider an important pair of submatrices of $\L$:
\[
\L_1 := \begin{pmatrix}
-\d +T_0 & -\d^{\dagger}_0 \\
 & & \\
\d^{\dagger}_0 \circ \mathrm{mult}_{|\Omega|_{\omega_0}} & 0 
\end{pmatrix}
\hspace{0.75cm}
\L_2 := \begin{pmatrix}
f_0^* + \left.\frac{\partial}{\partial \omega} \right|_{\omega_0} \left(\d^{\dagger}_{f_0^*\omega} \rho_0\right) \\
\\
0
\end{pmatrix}.
\]
We denote the domains of $\L_1$ and $\L_2$ by $\X_1$ and $\X_2$ respectively, so that $\X = \X_1 \oplus \X_2$ and $\L = \L_1 \oplus \L_2$, where
\begin{align*}
\X_1 &= C^{\ell,a}(\Lambda^1(L)) \oplus C^{\ell,a}\left(\mathcal{H}^3(L)^{\perp}\right)\\
\X_2 &= C^{\ell,a}\left(\Lambda^{1,1}_{\R}(X)\right).
\end{align*}

The operators $\L_1$ can be understood as partial derivatives of $\news$ along the subspaces $\X_1$ and $\X_2$ of $\X$, respectively. The importance of $\L_1$ is that given the fixed Hermitian metric $\omega_0$, the kernel of $\L_1$ corresponds via the isomorphism $J\circ \sharp_0$ to infinitesimal deformations of $L$ valued in $J(TL)$ which preserve the PSL condition. These infinitesimal deformations are exactly the solutions of the system
\begin{align}\label{eqns-inf-def}
\begin{split}
\d\alpha  + \d^{\dagger}_0 \dot{\rho} - T(\omega_0)\alpha &= 0 \\
\d^{\dagger}_0\left(|\Omega|_{\omega_0}\alpha\right) &= 0.
\end{split}
\end{align}
Notice by applying $\d$ to the first equation, we see that any infinitesimal PSL deformation also has to satisfy the equation
\begin{align*}
&\Delta \dot{\rho} = \mathscr{L}_{J\alpha^{\sharp}} \d\omega.
\end{align*}

One consequence of the following lemma is that the space of solutions of (\ref{eqns-inf-def}) has a finite dimension.

\begin{lemma}\label{lem-ellip}
Recall the Banach space $\Y = \Y_{P_0}$ given by
\[
\Y := C^{\ell-1,a}\left(\Lambda^2(L)\right) \oplus \d_0^{\dagger}C^{\ell,a}\left(\Lambda^1(L)\right).
\]
The operator 
\begin{align*}
&\L_1 : \X_1 \to \Y \\
&\L_1 \begin{pmatrix}
\alpha \\ \\ \dot\rho
\end{pmatrix}
=  \begin{pmatrix}
-\d +T(\omega_0) & -\d^{\dagger}_0 \\
 & & \\
\d^{\dagger}_0 \circ \mathrm{mult}_{|\Omega|_{\omega_0}} & 0 
\end{pmatrix}
\begin{pmatrix}
\alpha \\ \\ \dot\rho
\end{pmatrix}
= \begin{pmatrix}
-\d\alpha  + T(\omega_0)\alpha - \d_0^{\dagger} \dot\rho \\
\\
\d_0^{\dagger}\left(|\Omega|_{\omega_0} \alpha\right)
\end{pmatrix}
\end{align*} is elliptic. Its Fredholm index is $\mathrm{ind}(\L_1) = 0$.
\end{lemma}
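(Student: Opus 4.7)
I recognize $\L_1$ as a compact perturbation of a (restriction of the) Hodge--de Rham operator on $L$, and compute the resulting index via Hodge theory. First I isolate the principal part: $T_0\alpha = f_0^*(\iota_{J\alpha^{\sharp_0}}\d\omega_0)$ is pointwise linear in $\alpha$ (a bundle homomorphism), hence zeroth order; and the Leibniz rule $\d_0^{\dagger}(|\Omega|_{\omega_0}\alpha) = |\Omega|_{\omega_0}\,\d_0^{\dagger}\alpha + (\text{zeroth order in }\alpha)$ shows the same for the second block. Thus the principal part of $\L_1$ is
\[
P(\alpha,\dot\rho) = \bigl(-\d\alpha - \d_0^{\dagger}\dot\rho,\ |\Omega|_{\omega_0}\,\d_0^{\dagger}\alpha\bigr),
\]
and $\L_1 - P$ is compact between the relevant Banach spaces on the compact manifold $L$.

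Ellipticity comes from a direct symbol computation: at $\xi\ne 0$, $\sigma_\xi(P)(\alpha,\dot\rho) = (-\xi\wedge\alpha - \iota_{\xi^{\sharp}}\dot\rho,\ |\Omega|_{\omega_0}\,\iota_{\xi^{\sharp}}\alpha)$, and if this vanishes, the second coordinate forces $\iota_{\xi^{\sharp}}\alpha = 0$ (using $|\Omega|_{\omega_0} > 0$); applying $\iota_{\xi^{\sharp}}$ to the first coordinate and using Cartan's identity $\iota_{\xi^{\sharp}}(\xi\wedge\alpha) = |\xi|^2\alpha - \xi\wedge\iota_{\xi^{\sharp}}\alpha$ gives $|\xi|^2\alpha = 0$, so $\alpha = 0$; then $\iota_{\xi^{\sharp}}\dot\rho = 0$ and $\dot\rho$ is a top-degree form, forcing $\dot\rho = 0$. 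Since $\Lambda^1\oplus\Lambda^3$ and $\Lambda^2\oplus\Lambda^0$ both have rank $4$ on $L$, injectivity of the symbol gives bijectivity, so $P$ (and thus $\L_1$) is elliptic.

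The index satisfies $\ind(\L_1) = \ind(P)$, so I compute $\ker P$ and $\coker P$ by Hodge theory. If $P(\alpha,\dot\rho) = 0$, then $\d_0^{\dagger}\alpha = 0$; applying $\d_0^{\dagger}$ to the $2$-form equation and $L^2$-pairing with $\alpha$ yields $\|\d\alpha\|^2 = 0$, so $\alpha\in\mathcal{H}^1(L)$; the remaining condition $\d_0^{\dagger}\dot\rho = 0$ puts $\dot\rho\in\mathcal{H}^3(L)$, and combined with the domain constraint $\dot\rho\in\mathcal{H}^3(L)^{\perp_{\omega_0}}$ forces $\dot\rho = 0$. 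Hence $\ker P\cong\mathcal{H}^1(L)$, of dimension $b_1(L)$. For the cokernel, a Hodge decomposition argument --- decompose the $2$-form target into exact and coexact parts, absorb the coexact part into $\d_0^{\dagger}\dot\rho$ (shifting $\dot\rho$ by a harmonic $3$-form to stay in $\mathcal{H}^3(L)^{\perp_{\omega_0}}$), then adjust $\alpha$ by a closed $1$-form to simultaneously achieve the exact part and the prescribed $\d_0^{\dagger}\alpha = f$, using surjectivity of the Laplacian on $0$-forms modulo constants --- identifies $\Image(P) = \mathcal{H}^2(L)^{\perp}\oplus\d_0^{\dagger}\Lambda^1(L)$ inside $\Lambda^2(L)\oplus\d_0^{\dagger}\Lambda^1(L)$, so $\coker P\cong\mathcal{H}^2(L)$, of dimension $b_2(L)$. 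Poincar\'e duality on the compact oriented $3$-manifold $L$ gives $b_1(L) = b_2(L)$, hence $\ind(\L_1) = 0$. The main subtlety is the precise matching of the finite-codimension restrictions built into $\X_1$ and $\Y$: the domain is cut down by codimension $b_3(L)$ and the codomain by codimension $b_0(L)$, and Poincar\'e duality $b_0 = b_3$ is exactly what produces the index-zero result --- this is the structural reason for the definitions in Section~\ref{subsec-moduli-space}.
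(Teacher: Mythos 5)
Your symbol computation and your kernel computation are fine (the contraction-identity argument for injectivity of the symbol is in fact a little cleaner than the paper's coordinate cross-product calculation), and your overall strategy --- reduce to a Hodge-type model operator, invoke stability of the index, finish with Hodge theory and Poincar\'e duality --- is the same in spirit as the paper's, which deforms $\L_1$ through the family $\L_1^t(\alpha+\dot\rho)=-\d\alpha+tT_0\alpha-\d_0^{\dagger}\dot\rho+\d_0^{\dagger}(|\Omega|_{\omega_0}^t\alpha)$ and computes kernel and cokernel at $t=0$. The genuine gap is in your reduction step: the proposed principal part $P(\alpha,\dot\rho)=\bigl(-\d\alpha-\d_0^{\dagger}\dot\rho,\ |\Omega|_{\omega_0}\,\d_0^{\dagger}\alpha\bigr)$ does not map $\X_1$ into $\Y$. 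The second factor of $\Y$ is $\d_0^{\dagger}C^{\ell,a}(\Lambda^1(L))$, i.e.\ functions with zero mean on each component of $L$, whereas $\int_L |\Omega|_{\omega_0}\,\d_0^{\dagger}\alpha\,\d\vol=\int_L\langle \d|\Omega|_{\omega_0},\alpha\rangle\,\d\vol$ is generally nonzero; the same is true of the zeroth-order difference $\L_1-P=\bigl(T_0\alpha,\ -\langle\d|\Omega|_{\omega_0},\alpha\rangle\bigr)$. So the claim ``$\L_1=P+\mathrm{compact}$ between the relevant Banach spaces'' does not typecheck with codomain $\Y$, and the identification $\Image(P)=\Hcal^2(L)^{\perp}\oplus\d_0^{\dagger}\Lambda^1(L)$, hence $\coker P\cong\Hcal^2(L)$, is not correct as stated: solving $|\Omega|_{\omega_0}\d_0^{\dagger}\alpha=f$ requires the weighted condition $\int_L f\,|\Omega|_{\omega_0}^{-1}\,\d\vol=0$, not $\int_L f\,\d\vol=0$.

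The argument is repairable, but the repair is exactly the bookkeeping you only gesture at in your last sentence, and it must be done explicitly. Either (i) run the compact-perturbation argument with the enlarged codomain $\tilde\Y=C^{\ell-1,a}(\Lambda^2(L))\oplus C^{\ell-1,a}(\Lambda^0(L))$: there $\ker P\cong\Hcal^1(L)$ still, but $\coker P$ has dimension $b_2(L)+b_0(L)$ (the $0$-form component of the image is cut out by the weighted mean-zero condition above), so $\ind_{\tilde\Y}(\L_1)=\ind_{\tilde\Y}(P)=b_1-b_2-b_0$, and then add back the codimension $b_0(L)$ of $\Y$ inside $\tilde\Y$ --- legitimate because $\L_1(\X_1)\subset\Y$ --- to recover $\ind_{\Y}(\L_1)=b_1-b_2=0$; or (ii) follow the paper and deform through the elliptic operators $\L_1^t$ above, each of which genuinely takes values in $\Y$ because its $0$-form component is literally $\d_0^{\dagger}$ of a $C^{\ell,a}$ $1$-form, and then your Hodge-theoretic kernel/cokernel computation applies verbatim at $t=0$, where the $|\Omega|_{\omega_0}$-weight has disappeared. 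With either fix the conclusion $\ind(\L_1)=0$ stands.
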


\begin{proof}
Consider the path of operators $t \mapsto \L_1^t$, where for each $t \in [0,1]$ we have
\[
\L_1^t(\alpha + \dot{\rho}) := -\d\alpha + tT(\omega_0)\alpha -\d_0^{\dagger} \dot{\rho} +\d_0^{\dagger}\left(|\Omega|_{\omega_0}^t \alpha\right).
\]
Given $(x,\xi)$ such that $x \in L$, and $\xi \in T_x^*L$ with $\xi \neq 0$, a computation of the principal symbol of $\L_1^t$ at $(x,\xi)$ yields
\begin{align*}
\sigma(\L_1^t)_{(x,\xi)} (\alpha + \dot{\rho}) &= -\xi \wedge \alpha + \iota_{\xi^{\sharp_0}} \dot{\rho} - |\Omega|_{\omega_0}^t \langle \xi, \alpha \rangle.
\end{align*}
See Section~\ref{sec-symb-calc} for the symbol calculation. It can then be shown that $-\xi \wedge \alpha + \iota_{\xi^{\sharp_0}} \dot{\rho}$ and $|\Omega|_{\omega_0}^t \langle \xi, \alpha \rangle$ cannot simultaneously vanish unless $\alpha = 0$ and $\dot{\rho}=0$ (see Section~\ref{sec-symb-isom} for details). It follows that $\sigma(\L_1^t)_{(x,\xi)}$ is an injection when $\xi \neq 0$, and noting that the ranks of the domain and codomain of $\L_1^t$ (as vector bundles over $L$) are equal, we see that $\sigma(\L_1^t)_{(x,\xi)}$ is a linear isomorphism between fibres. We conclude that $\L_1^t$ is elliptic for each $t \in [0,1]$.

The operator $\L_1^0$ is given by
\[
\L_1^0(\alpha + \dot{\rho}) := -\d\alpha  - \d_0^{\dagger} \dot{\rho} + \d_0^{\dagger} \alpha.
\]

We claim that $\ker \L_1^0 \cong \Hcal^1(L)$. The inclusion $\Hcal^1(L) \subset \ker \L_1^0$ is clear. For the reverse inclusion, let $\alpha + \dot{\rho} \in \ker \L_1^0$. Then we have
\begin{align*}
&\d\alpha + \d_0^{\dagger} \dot{\rho} = 0, \\
&\d_0^{\dagger} \alpha = 0.
\end{align*}
By the Hodge decomposition of $\Lambda^2(L)$, the first of these equations implies that $\d\alpha  = \d_0^{\dagger} \dot{\rho} = 0$. I.e. $\dot{\rho} \in \Hcal^3(L) \cap \Hcal^3(L)^{\perp}$, so $\dot\rho = 0$. The equations then reduce to $\d\alpha  = 0$ and $\d_0^{\dagger}\alpha = 0$, and we get that $\alpha \in \Hcal^1(L)$. It follows that $\dim \ker \L_1^0 = b_1(L)$.

Now, we claim that $\coker \L_1^0 \cong \Hcal^2(L)$. In order to prove this, it suffices to show that 
\begin{equation}\label{inc-coker}
\d^{\dagger}_0 C^{\ell,a}(\Lambda^1(L)) \oplus C^{\ell-1,a}(\Hcal^2(L)^{\perp}) = \im \L_1^0.
\end{equation}
The inclusion $\im \L_1^0 \subset \d^{\dagger}_0 C^{\ell,a}(\Lambda^1(L)) \oplus C^{\ell-1,a}(\Hcal^2(L)^{\perp})$ is clear. To see the reverse inclusion, choose arbitrary elements of $\d^{\dagger}_0 C^{\ell,a}(\Lambda^1(L))$ and $ C^{\ell-1,a}(\Hcal^2(L)^{\perp})$, which we may write as $\d^{\dagger} \tau$ and $\d\eta + \d_0^{\dagger} \sigma$ respectively, where $\tau \in C^{\ell,a}\left(\Lambda^1(L)\right)$ and $\sigma \in C^{\ell,a}\left(\Hcal^3(L)^{\perp}\right)$

One may find $\alpha \in C^{\ell,a}\left(\Lambda^1(L)\right)$ and $\dot{\rho} \in C^{\ell,a}\left(\Hcal^3(L)^{\perp} \right)$ which satisfy the equations
\begin{align*}
&\d_0^{\dagger}\tau = \d_0^{\dagger} \alpha \\
&\d\eta + \d_0^{\dagger} \sigma = -\d\alpha  - \d_0^{\dagger} \dot{\rho}.
\end{align*}
Indeed, write $\alpha = -\eta + \d\zeta$ and $\dot{\rho} = -\sigma$, where $\zeta$ solves the equation
\begin{equation}\label{eqn-hmc-0-form}
\Delta \zeta = \d_0^{\dagger}(\tau + \eta).
\end{equation}
A solution of (\ref{eqn-hmc-0-form}) exists since $\d_0^{\dagger}(\tau - \eta) \in \Hcal^0(L)^{\perp}$. Then we see that
\[
-\d\alpha  - \d_0^{\dagger} \dot{\rho} = \d\eta + \d_0^{\dagger} \d\sigma
\]
and
\begin{align*}
\d_0^{\dagger}(\tau - \alpha) &= \d_0^{\dagger}(\tau + \eta - d\zeta) \\
&= \d_0^{\dagger}(\tau + \eta) - \Delta \zeta =0,
\end{align*}
as desired. Thus, we have shown (\ref{inc-coker}). It follows that $\dim \coker \L_1^0 = b_2(L)$. 

By Poincar\'e duality, we have
\[
\ind(\L_1^0) = b_1(L) - b_2(L) = 0.
\]
Finally, by the invariance of Fredholm index under deformations, we have $\ind(\L_1) = 0$ as well, completing the proof.
\end{proof}

While the lemma above shows that $\L_1$ is Fredholm, note that $\L_2$ is certainly not Fredholm. Given a real $(1,1)$-form $\dot{\omega}$ on $X$, the map $\L_2$ does not see any behaviour of $\dot{\omega}$ away from $L$, so $\L_2$ has an infinite-dimensional kernel. Nevertheless, as we will see in the next section, ellipticity of $\L_1$ and surjectivity of $\L = \L_1 \oplus \L_2$ are strong enough conditions that we can still prove $s^{-1}(0)$ has a fairly nice structure, namely that of a Banach manifold.

\subsection{The local universal moduli space is a Banach manifold}\label{sec-univ-mod}

Let $F: \M \to \N$ be a $C^1$ map between Banach manifolds. We say that $F$ is a \textbf{submersion} at a point $p \in \M$ if the derivative
\[
\mathrm{D}F_p : T_p \M \to T_y \N
\]
is surjective, and $\ker \mathrm{D}F_p$ splits $T_p \M$, meaning that there exists a closed linear subspace $A \subset T_p \M$ such that $T_p \M = \ker \mathrm{D}F_p \oplus A$. A point $y \in \N$ is said to be a \textbf{regular value} of $F$ if $F$ is a submersion at every $p \in F^{-1}(y)$.

Using the implicit function theorem for Banach spaces, one can prove the following:

\begin{theorem}[Regular value theorem]\label{thm-subm}
Suppose that $F: \M \to \N$ is a $C^1$ map between Banach manifolds, and $y \in \N$ is a regular value of $F$. Then $F^{-1}(y)$ is a Banach submanifold of $\M$.
\end{theorem}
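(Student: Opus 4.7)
The plan is to work in local charts and reduce the assertion to the implicit function theorem for Banach spaces, which produces the zero set of a submersion as the graph of a $C^1$ function over a splitting subspace. Fix $p \in F^{-1}(y)$, and choose charts $\varphi: V \to E$ of $\M$ about $p$ with $\varphi(p) = 0$ and $\psi: W \to E'$ of $\N$ about $y$ with $\psi(y) = 0$, where $E$ and $E'$ are the Banach spaces modeling $\M$ and $\N$ near the two points. After shrinking $V$ so that $F(V) \subset W$, the local representative $\tilde F := \psi \circ F \circ \varphi^{-1}$ is a $C^1$ map between open subsets of $E$ and $E'$, with $\tilde F(0) = 0$. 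The submersion hypothesis transfers directly: $K := \ker D\tilde F(0)$ is closed and admits a closed complement $A \subset E$, so $E = K \oplus A$, and $D\tilde F(0)|_A: A \to E'$ is a bounded linear bijection, hence a topological isomorphism by the open mapping theorem.

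Next I would apply the Banach space implicit function theorem to the map $(k,a) \mapsto \tilde F(k+a)$, whose partial derivative in the $A$-direction at the origin is precisely $D\tilde F(0)|_A$, an isomorphism. This produces open neighborhoods $U_K \subset K$ and $U_A \subset A$ of $0$ together with a unique $C^1$ map $g: U_K \to U_A$ with $g(0) = 0$ such that for $(k,a) \in U_K \times U_A$ the equation $\tilde F(k+a) = 0$ is equivalent to $a = g(k)$. Thus, near $0 \in E$, the preimage $\tilde F^{-1}(0)$ is the graph of $g$, and the $C^1$ diffeomorphism $(k,a) \mapsto (k, a - g(k))$ straightens this graph into $U_K \times \{0\}$. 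Pulling back through $\varphi$ yields a submanifold chart for $F^{-1}(y)$ in $\M$ near $p$; since $p$ was arbitrary, these charts cover $F^{-1}(y)$ and exhibit it as a Banach submanifold of $\M$ modeled on $K$.

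The essential analytic content is packaged entirely into the implicit function theorem, which in the Banach setting carries no surprises beyond the classical statement. The subtle point, and the reason the splitting condition on $\ker \mathrm{D}F_p$ is built into the very definition of a submersion above, is that an arbitrary closed subspace of a Banach space need not admit a closed complement; without a splitting one cannot produce the decomposition $E = K \oplus A$ that the implicit function theorem requires to solve $\tilde F(k+a)=0$ for $a$ in terms of $k$. I would therefore expect no real difficulty in executing the proof, provided one checks carefully that the splitting supplied by the regular value hypothesis descends to a topological direct sum decomposition in the local chart and that the complement $A$ is closed; after that, everything else is routine bookkeeping.
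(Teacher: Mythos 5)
Your proposal is correct and is exactly the route the paper intends: the paper gives no written proof, merely noting that the statement follows from the implicit function theorem for Banach spaces, and your chart-based reduction, the use of the splitting $E = K \oplus A$ with $D\tilde F(0)|_A$ an isomorphism, and the graph-straightening step constitute the standard execution of that idea. No gaps.
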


Recall that the local universal moduli space $\M(\mathcal{V}^{\ell,a})$ defined in Section~\ref{subsec-linearizations} is given by the set $s^{-1}(0)$, where $s$ is as defined in (\ref{eqn-sec-def}). We will show that $\M(\mathcal{V}^{\ell,a})$ is a Banach manifold by showing that $0$ is a regular value of $s$ -- i.e. that $\L = \mathrm{D}s_{P_0}$ is surjective at each $P_0 \in \M(\mathcal{V}^{\ell,a})$ and $\ker \L$ splits $\X = T_{P_0} \mathcal{V}^{\ell,a}$. Being a submersion is an open condition, so it suffices to show that $\L$ is a submersion when $\d^{\dagger}_0 \rho_0 = 0$. Then $\L$ would still be a submersion when $\rho_0$ is in the small tube $\T^{\ell,a}_{f_0,\omega_0} \cap \mathcal{H}^3(L)^{\perp_{\omega_0}}$, since the correction term in Lemma~\ref{lem-compute-L} has a small operator norm. We first show that $\L$ is surjective when $\d^{\dagger}_0 \rho_0 = 0$. 

\begin{lemma}\label{lem-transversality}
When $\d^{\dagger}_0 \rho_0 = 0$, the operator $\L: \X \to \Y$ is bounded and surjective onto $\Y$.
\end{lemma}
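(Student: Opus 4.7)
Boundedness of $\L$ is immediate from inspecting the formal Jacobian of Lemma~\ref{lem-compute-L}: the operators $\d$ and $\d^{\dagger}_0$ are first order and hence bounded $C^{\ell,a} \to C^{\ell-1,a}$, the torsion map $T_0$ is algebraic, and by Lemma~\ref{lem-2-form-ext} the pullback $f_0^*\colon C^{\ell,a}(\Lambda^{1,1}_{\R}(X)) \to C^{\ell,a}(\Lambda^2(L))$ is bounded. Under the standing hypothesis $\d^{\dagger}_0 \rho_0 = 0$, together with $\rho_0 \in \mathcal{H}^3(L)^{\perp_{\omega_0}}$, the $3$-form $\rho_0$ is simultaneously co-closed (and automatically closed, since it is a top-degree form) and orthogonal to the harmonics, hence $\rho_0 = 0$; in particular the correction term in Lemma~\ref{lem-compute-L} drops out and only the leading piece of $\L$ needs to be analyzed.

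For surjectivity the plan is a Fredholm-alternative argument. Since $\L_1$ is Fredholm of index zero by Lemma~\ref{lem-ellip}, its image is closed of finite codimension, and consequently $\im \L$ is itself closed (its image in the finite-dimensional quotient $\Y / \im \L_1$ is automatically closed). By Hahn--Banach it suffices to check that every continuous functional $\phi$ on $\Y$ that annihilates $\im \L$ is zero. Such $\phi$ first annihilates $\im \L_1$, so by the Fredholm alternative it is represented via the $L^2$ pairing by some pair $(\mu, f)$ in the kernel of the formal adjoint
\[
\L_1^*(\mu, f) = \bigl(-\d^{\dagger}\mu + T_0^* \mu + |\Omega|_{\omega_0}\, \d f,\; -\d \mu\bigr),
\]
which by elliptic regularity for the elliptic operator $\L_1^*$ can be taken smooth. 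The remaining condition that $\phi$ annihilates $\im \L_2 = \{(f_0^*\dot{\omega},0) : \dot{\omega} \in C^{\ell,a}(\Lambda^{1,1}_{\R}(X))\}$ reads $\int_L \langle f_0^*\dot{\omega},\mu\rangle\,\d\vol_L = 0$ for every admissible $\dot{\omega}$. Lemma~\ref{lem-2-form-ext} furnishes, for each smooth $2$-form $\eta$ on $L$, a smooth $(1,1)$-form $\dot{\omega}$ on $X$ with $f_0^*\dot{\omega} = \eta$, and this forces $\mu \equiv 0$ pointwise. Substituting back, $\L_1^*(0, f) = 0$ reduces to $|\Omega|_{\omega_0}\d f = 0$, so $\d f = 0$ and $f$ is locally constant on $L$; integration by parts then shows $(0, f)$ pairs to zero against every element of $\d^{\dagger}_0 C^{\ell,a}(\Lambda^1(L))$, so $\phi = 0$.

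The main subtlety I anticipate is a regularity gap that rules out a direct constructive proof of surjectivity: the image of $\L_2$ lies in $C^{\ell,a}(\Lambda^2(L))$, a proper subspace of the codomain component $C^{\ell-1,a}(\Lambda^2(L))$, and Lemma~\ref{lem-2-form-ext} does not permit extending a merely $C^{\ell-1,a}$ form on $L$ to a $C^{\ell,a}$ form on $X$. The Fredholm-alternative route sidesteps this difficulty precisely because the obstruction to surjectivity lives only in the finite-dimensional $\coker \L_1$, whose representatives are smooth by elliptic regularity; to defeat a smooth obstruction one only needs smooth $\dot{\omega}$, for which the extension lemma is more than adequate.
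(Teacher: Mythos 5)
Your argument is correct, but it is a genuinely different route from the paper's. The paper proves surjectivity by a direct construction: writing a target element as $\eta + \d_0^{\dagger}\tau$, it takes $\dot\rho = 0$, $\alpha = |\Omega|_{\omega_0}^{-1}\tau$, and invokes Lemma~\ref{lem-2-form-ext} to produce $\dot\omega$ with $f_0^*\dot\omega = \eta + \d\alpha - T_0\alpha$; boundedness is then observed entrywise, as you also do. Notably, the regularity mismatch you flag as ruling out the constructive route is genuinely present in the paper's proof: there $\eta$ and $\tau$ are taken in $C^{\ell,a}$ and $C^{\ell+1,a}$, one degree better than generic elements of the codomain (\ref{Y-def}), since $\d\alpha - T_0\alpha$ is only $C^{\ell-1,a}$ for $\alpha \in C^{\ell,a}$ and Lemma~\ref{lem-2-form-ext} yields an extension of the same regularity as the form being extended (and higher-regularity elements are not even dense in the H\"older topology). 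Your duality argument sidesteps this cleanly: $\im \L$ is closed because it contains the closed finite-codimension subspace $\im \L_1$, and the only obstruction to surjectivity is then a finite-dimensional cokernel with smooth representatives, which is annihilated by pairing against $f_0^*\dot\omega$ for smooth $\dot\omega$; your final step correctly disposes of the locally constant part of the adjoint kernel, which pairs trivially against the co-exact second factor of $\Y$. The price is that you invoke more elliptic machinery than the paper's Lemma~\ref{lem-ellip} records: you need the standard fact that for the elliptic operator $\L_1$ on the compact manifold $L$ the image in H\"older spaces is exactly the $L^2$-annihilator of the smooth kernel of the formal adjoint (closed range plus adjoint elliptic regularity), not merely that $\L_1$ is Fredholm of index $0$; this is standard but should be stated as an input. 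Your preliminary observation that $\d_0^{\dagger}\rho_0 = 0$ together with $\rho_0 \in \mathcal{H}^3(L)^{\perp_{\omega_0}}$ forces $\rho_0 = 0$, so that the correction term in Lemma~\ref{lem-compute-L} vanishes identically rather than merely being small, is also correct and consistent with how the paper uses the hypothesis.
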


\begin{proof}
Consider an arbitrary element of $\Y$, which we may write as $\eta + \d_0^{\dagger} \tau$, for some $\eta \in C^{\ell,a}(\Lambda^2(L))$ and $\tau \in C^{\ell+1,a}(\Lambda^1(L))$. Recalling Lemma~\ref{lem-compute-L}, we have
\[
\L\begin{pmatrix}
\alpha \\
0 \\
\dot{\omega}
\end{pmatrix} =
\begin{pmatrix}
-\d\alpha  + T_0\alpha + f^* \dot{\omega} \\
\\
\d^{\dagger}_0 (|\Omega|_{\omega_0} \alpha)
\end{pmatrix} = \begin{pmatrix}
\eta \\
\\
\d^{\dagger}_0 \tau
\end{pmatrix},
\]
where we have chosen $\alpha$ and $\dot{\omega}$ such that
\begin{align*}
&\alpha = |\Omega|_{\omega_0}^{-1} \tau \\
&f^* \dot{\omega} = \eta + \d\alpha  - T_0\alpha.
\end{align*}
Lemma~\ref{lem-2-form-ext} guarantees the existence of an $\dot{\omega} \in C^{\ell,a}(\Lambda^{1,1}_{\R}(X))$ satisfying the last equality. Thus $\L$ surjects onto $\Y$. Since all of the entries in $\L$ are bounded operators, it follows that $\L$ is bounded as well.
\end{proof}

We would like to show that the kernel of $\L$ splits $T_{P_0} \mathcal{V}^{\ell,a} = \X$. First we will show that $\L$ admits a bounded right inverse when $\d^{\dagger}_0 \rho_0 = 0$.

\begin{lemma}\label{lem-right-inverse}
If $\d^{\dagger}_0 \rho_0 = 0$, then the map $\L : \X \to \Y$ admits a bounded right inverse.
\end{lemma}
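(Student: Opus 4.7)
The plan is to produce an explicit bounded linear right inverse $R:\Y\to\X$ by making each step of the surjectivity proof of Lemma~\ref{lem-transversality} bounded and linear in the input. First, under the hypothesis $\d_0^{\dagger}\rho_0=0$, the $3$-form $\rho_0$ in fact vanishes: being top-degree on $L^3$ it is automatically closed, so the hypothesis makes it harmonic, while by the definition of the parameter space $\rho_0\in\Hcal^3(L)^{\perp_{\omega_0}}$, so $\rho_0\in\Hcal^3(L)\cap\Hcal^3(L)^{\perp_{\omega_0}}=\{0\}$. Consequently the correction block in Lemma~\ref{lem-compute-L} vanishes identically at $P_0$ and we may work with the simplified matrix form of $\L$.

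Given $(\mu_2,\mu_0)\in\Y$, I set $R(\mu_2,\mu_0):=(\alpha,0,\dot\omega)$ defined as follows. For the zero-form slot, let $G$ denote the Green's operator for the Hodge Laplacian on functions, restricted to $\Hcal^0(L)^{\perp_{\omega_0}}$; by standard elliptic theory on the closed manifold $L$, $G$ is a bounded linear map gaining two derivatives of H\"older regularity. Since $\mu_0\in\d_0^{\dagger}C^{\ell,a}(\Lambda^1(L))$ is $L^2$-orthogonal to $\Hcal^0(L)$, the function $g:=G\mu_0$ satisfies $\Delta g=\mu_0$ and $\d_0^{\dagger}g=0$ (trivially, for $0$-forms), so $\d_0^{\dagger}\d g=\Delta g=\mu_0$. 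Setting
\[
\alpha:=|\Omega|_{\omega_0}^{-1}\,\d g,
\]
one obtains $\d_0^{\dagger}(|\Omega|_{\omega_0}\alpha)=\mu_0$, and the assignment $\mu_0\mapsto\alpha$ is bounded and linear. For the two-form slot, I claim that the construction of Lemma~\ref{lem-2-form-ext} can be upgraded to a bounded linear extension operator $E$ from $2$-forms on $L$ to real $(1,1)$-forms on $X$ with $f_0^*\circ E=\mathrm{id}$: fix once and for all a finite cover $\{U_r\}$ of $L$, thickenings $\tilde U_r\subset X$, local orthonormal frames, and a subordinate partition of unity, after which the formula (\ref{eqn-ambient-form}) is manifestly linear in the input $2$-form and H\"older-bounded by the usual product estimate for multiplication by fixed smooth functions. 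Setting $\dot\omega:=E(\mu_2+\d\alpha-T_0\alpha)$ yields $-\d\alpha+T_0\alpha+f_0^*\dot\omega=\mu_2$, so $\L\circ R=\mathrm{id}_{\Y}$, and $R$ is bounded as a composition of bounded linear maps.

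The main technical point to be verified is exactly this upgrade of Lemma~\ref{lem-2-form-ext}: the lemma as stated asserts only pointwise existence of an extension, but the cited partition-of-unity construction depends linearly on the extending coefficient functions $\tilde\eta_{jk}$, and once the cover, frames, and partition of unity are fixed independently of the input, the extension $\eta\mapsto E\eta$ is a bounded linear map in the relevant H\"older norms. The Green's operator bounds and interior Schauder estimates used for the zero-form slot are standard on the compact manifold $L$. Once $R$ is in hand, one may note in passing that $R\circ\L$ is a bounded linear projection on $\X$ with kernel $\ker\L$, so that $\X=\ker\L\oplus R(\Y)$ is a topological direct sum, which is the splitting needed to apply the regular value theorem in the subsequent discussion.
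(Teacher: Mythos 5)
Your proposal is correct, but it takes a genuinely different route from the paper. The paper obtains the bounded right inverse abstractly: it uses the Fredholm property of $\L_1$ from Lemma~\ref{lem-ellip} to write $\X_1 = A \oplus \ker\L_1$ and $\Y = \im\L_1 \oplus \mathrm{span}\{y_1,\ldots,y_k\}$, picks finitely many preimages of the $y_i$ using the surjectivity of $\L$ (Lemma~\ref{lem-transversality}), and gets boundedness of the resulting map $h$ from the Open Mapping Theorem applied to $\L_1^{-1}:\im\L_1 \to \X_1/\ker\L_1$. You instead build the right inverse explicitly: the co-exact zero-form slot is inverted by the scalar Green's operator (your identity $\d_0^\dagger(|\Omega|_{\omega_0}\alpha)=\Delta g=\mu_0$ with $\alpha=|\Omega|_{\omega_0}^{-1}\d g$ is correct, and $\mu_0\perp\Hcal^0(L)$ guarantees solvability), while the two-form slot is handled through the $\dot\omega$ variable via a bounded \emph{linear} extension operator $E$ with $f_0^*\circ E=\mathrm{id}$; your observation that the hypothesis forces $\rho_0=0$ (top-degree, hence closed, hence harmonic, yet in $\Hcal^3(L)^{\perp_{\omega_0}}$), killing the correction block of Lemma~\ref{lem-compute-L}, is a clean justification the paper leaves implicit. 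Your route buys several things: it needs no Fredholm theory or index computation at this stage, it gives an explicit formula for $R$, and, as you note, $R\circ\L$ is then a bounded projection so the splitting $\X=\ker\L\oplus R(\Y)$ (the content of the paper's next lemma) comes for free. What it costs is exactly the point you flag: Lemma~\ref{lem-2-form-ext} as stated gives only existence of an extension plus boundedness of $f^*$, so you must fix the cover, frames, tubular projections, and partition of unity once and for all to make $\eta\mapsto E\eta$ linear and H\"older-bounded; this upgrade is routine (e.g.\ extend the coefficients $\eta_{jk}$ by pullback along the tubular-neighbourhood projection) and your sketch of it is adequate. One bookkeeping caveat, inherited rather than introduced: extending $\mu_2+\d\alpha-T_0\alpha\in C^{\ell-1,a}$ produces $\dot\omega$ of regularity $C^{\ell-1,a}$, whereas $\X_2$ is written as $C^{\ell,a}(\Lambda^{1,1}_{\R}(X))$; the paper's own proof of Lemma~\ref{lem-transversality} glosses over the same index shift, so your argument is at the same level of rigor as the text on this point.
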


\begin{proof}
By Lemma~\ref{lem-ellip}, $\ker \L_1$ and $\coker \L_1$ are finite-dimensional. Let $A$ denote a closed subspace of $\X_1$ such that $\X_1 = A \oplus \ker \L_1$, and let $y_1, \ldots, y_k \in \Y$ be elements such that $\Y = \im \L_1 \oplus \mathrm{span} \{y_1, \ldots, y_k\}$.

By surjectivity of $\L$ (Lemma~\ref{lem-transversality}), there exist elements $x_1, \ldots, x_k \in A$ such that $\L_2 x_i = y_i$ for $i=1,\ldots,k$. Each $y \in \Y$ can be expressed uniquely as
\[
y = \L_1 a + \sum\limits_{i=1}^k c_i y_i, \hspace{0.5cm} c_1, \ldots, c_k \in \R, a \in A.
\]
We can construct a map $h: \Y \to \X$ by
\[
h: y \mapsto a + \sum\limits_{i=1}^k c_i x_i.
\]
which is a right inverse of $\L$ by construction. The map $h$ is bounded since
\begin{align*}
\L_1^{-1}: \im \L_1 \to \X_1 / \ker \L_1
\end{align*}
is bounded by the Open Mapping Theorem, and the remaining part
\[
\sum\limits_{i=1}^k c_i y_i \mapsto \sum\limits_{i=1}^k c_i x_i
\]
is a linear map between finite-dimensional vector spaces.
\end{proof}

\begin{lemma}
If $\d^{\dagger}_0 \rho_0 = 0$, then the kernel of $\L : \X \to \Y$ splits $\X$.
\end{lemma}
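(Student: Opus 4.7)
The plan is to leverage Lemma~\ref{lem-right-inverse}, which gives a bounded right inverse $h: \Y \to \X$ of $\L$. It is a standard fact in Banach space theory that the existence of a bounded right inverse for a surjective bounded operator forces the kernel to be complemented. First I would construct the bounded linear map
\[
P := \Id_{\X} - h \circ \L : \X \to \X.
\]
Then I would verify that $P$ is a bounded projection (i.e. $P^2 = P$) by direct computation: $P^2 = \Id - 2h\L + h(\L h)\L = \Id - 2h\L + h\L = P$, using $\L \circ h = \Id_{\Y}$.

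Next I would identify the image of $P$ with $\ker \L$. One direction: $\L \circ P = \L - (\L \circ h)\circ \L = \L - \L = 0$, so $\im P \subseteq \ker \L$. Conversely, for $x \in \ker \L$, $P(x) = x - h(\L x) = x - h(0) = x$, so $\ker \L \subseteq \im P$. Hence $\im P = \ker \L$.

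Since $P$ is a continuous projection, both $\ker P$ and $\im P$ are closed, and $\X = \ker P \oplus \im P$. Substituting $\im P = \ker \L$, we obtain the splitting
\[
\X = \ker \L \oplus \ker P,
\]
with closed complement $\ker P = \im(h \circ \L)$. This completes the argument. There is no real obstacle here: all the work has been done in Lemma~\ref{lem-right-inverse}, and the present lemma is a formal consequence of the existence of a bounded right inverse. The only subtlety worth noting is that the splitting is available precisely in the case $\d^{\dagger}_0 \rho_0 = 0$; for nearby $\rho_0$ in the tube $\T^{\ell,a}_{f_0,\omega_0} \cap \mathcal{H}^3(L)^{\perp_{\omega_0}}$, the correction term identified in Lemma~\ref{lem-compute-L} has small operator norm, so the set of $P_0$ at which $\L$ admits such a splitting is open, which is what is required for the Sard--Smale argument in the next section.
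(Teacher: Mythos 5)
Your argument is correct and is essentially the paper's own proof: both rest entirely on the bounded right inverse $h$ from Lemma~\ref{lem-right-inverse}, and your projection $P = \Id_{\X} - h \circ \L$ just repackages the paper's decomposition $x = (x - h(\L x)) + h(\L x)$ with $\im P = \ker \L$ and $\ker P = \im h$ as the closed complement. No gap; the projection formulation even makes the closedness of the complement slightly more explicit than the paper's statement.
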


\begin{proof}
We can write any $x \in \X$ as
\[
x = (x - h(\L x)) + h(\L x),
\]
where $h$ is the right inverse of $\L$ from Lemma~\ref{lem-right-inverse}. We note that $x - h(\L x) \in \ker \L$ and $h(\L x) \in \im h$. Thus $\X = \ker \L + \im h$. To see that this sum is direct, suppose that $x \in \ker \L \cap \im h$. Then we have $\L x = 0$ and $x = hy$ for some $y \in \Y$, so that $\L h y = 0$. Since $h$ is a right inverse of $\L$, we have $y=0$, and we conclude that $x = 0$. Thus $\X = \ker \L \oplus \im h$, a direct sum of closed subspaces.
\end{proof}

Now recalling the discussion before Lemma~\ref{lem-transversality}, we have the following:

\begin{lemma}
The local universal moduli space $\M(\mathcal{V}^{\ell,a}) = s^{-1}(0)$ is a Banach manifold.
\end{lemma}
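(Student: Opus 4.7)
The strategy is to apply the Regular Value Theorem (Theorem~\ref{thm-subm}) to the section $s: \mathcal{V}^{\ell,a} \to \Y$, viewed---via a local trivialization of the bundle $\Y \to \mathcal{V}^{\ell,a}$---as a smooth map between Banach manifolds. The conclusion then follows once I verify that at every $P_0 \in s^{-1}(0) = \M(\mathcal{V}^{\ell,a})$, the linearization $\L = \mathrm{D}s_{P_0}$ is surjective onto $\Y$ and $\ker \L$ splits the tangent space $\X$.

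In the special case $\d^{\dagger}_0 \rho_0 = 0$ (so that $f_0$ is actually SL), these two properties are precisely what the three preceding lemmas provide: Lemma~\ref{lem-transversality} gives surjectivity of $\L$, Lemma~\ref{lem-right-inverse} produces a bounded right inverse $h : \Y \to \X$, and the preceding splitting lemma uses $h$ to decompose $\X = \ker \L \oplus \im h$ as a closed direct sum. For a general $P_0 \in s^{-1}(0)$, Lemma~\ref{lem-compute-L} writes $\L = \L^{\mathrm{main}} + \Delta\L$, where $\L^{\mathrm{main}}$ is exactly the operator treated in those lemmas and $\Delta\L$ is the correction term appearing in the $\dot\omega$ column. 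Provided $\norm{\Delta\L} < \norm{h}^{-1}$, a Neumann-series argument upgrades $h$ to a bounded right inverse $\tilde h := h \circ (I + \Delta\L \circ h)^{-1}$ of $\L$. From this right inverse, surjectivity of $\L$ is immediate, and the splitting argument of the preceding lemma, applied with $\tilde h$ in place of $h$, yields $\X = \ker \L \oplus \im \tilde h$.

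The main technical step, and the one I expect to be the principal obstacle, is verifying that $\norm{\Delta\L}$ is in fact smaller than $\norm{h}^{-1}$. Because $\Delta\L$ depends linearly on $\rho_0$ and $\rho_0$ lies in the tube $\T^{\ell+1,a}_{[f_0],\omega_0}$, its operator norm is controlled by $\norm{\rho_0}_{C^{\ell+1,a}}$, which can be made as small as required by shrinking the tube radius in the definition of $\mathcal{V}^{\ell,a}$ around $P_0$ if necessary. The bound requires differentiating $\d^{\dagger}_{f_0^*\omega}\rho_0$ in $\omega$ and tracking the resulting constant in terms of $\norm{\rho_0}_{C^{\ell+1,a}}$ and the geometry of $(X,J,\omega_0)$. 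Once this smallness bound is in hand, the Neumann-series assembly is routine, and Theorem~\ref{thm-subm} immediately endows $\M(\mathcal{V}^{\ell,a}) = s^{-1}(0)$ with the structure of a Banach submanifold of $\mathcal{V}^{\ell,a}$.
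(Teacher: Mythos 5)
Your proposal is correct and follows essentially the same route as the paper: the three preceding lemmas (surjectivity, bounded right inverse, and splitting of $\ker\L$ at points where $\d^{\dagger}_0\rho_0=0$) together with the smallness of the correction term from Lemma~\ref{lem-compute-L} show that $0$ is a regular value of $s$, and Theorem~\ref{thm-subm} then gives the Banach manifold structure. Your Neumann-series construction of the perturbed right inverse simply makes explicit the paper's remark that being a submersion is an open condition and that the correction term has small operator norm on the tube $\T^{\ell,a}_{f_0,\omega_0}$.
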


\subsection{Transversality}\label{subsec-transvers}

In this section, we would like to adopt a slightly different perspective on the situation, assigning to each $\omega \in \Herm^{\ell,a}(X)$ a map $s_{\omega}$ given by $s_{\omega}(\alpha, \rho) = s(\alpha, \rho, \omega)$, and a map $\news_{\omega}$ given by $\news_{\omega}(\alpha, \rho) = \news(\alpha, \rho, \omega)$. Using the Sard--Smale technique, we will show that the local moduli space $\M_{\omega}(\mathcal{V}^{\ell,a}) = s_{\omega}^{-1}(0)$ is a $0$-dimensional manifold for generic $\omega \in \Herm^{\ell,a}(X)$.

The classical Sard Theorem asserts that the set of regular values of a smooth map is large, meaning that the set has full measure in the target. In functional analysis, there is the analogous \emph{Sard--Smale Theorem} \cite{Smale}:

\begin{theorem}[Sard--Smale Theorem]
If $F: \M \to \N$ is a $C^k$ Fredholm map between Banach manifolds, and $k > \mathrm{index}(F)$, then the set of regular values of $F$ is comeagre in $\Y$.
\end{theorem}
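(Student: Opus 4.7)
The plan is to reduce the infinite-dimensional statement to the classical (finite-dimensional) Sard theorem via a local normal-form argument. At any point $p \in \M$, the Fredholm hypothesis supplies closed complements $T_p\M = \ker\mathrm{D}F_p \oplus E_1$ and $T_{F(p)}\N = \mathrm{image}\,\mathrm{D}F_p \oplus C$ with $\ker\mathrm{D}F_p$ and $C$ finite-dimensional and $\dim\ker\mathrm{D}F_p - \dim C = \mathrm{index}(F)$. First I would apply the Banach-space implicit function theorem to the projection of $F$ onto $\mathrm{image}\,\mathrm{D}F_p$; this produces $C^k$ charts near $p$ and $F(p)$ in which $F$ takes the product form
\[
(x,y) \longmapsto \bigl(x,\, g(x,y)\bigr),\qquad x \in E_1,\ y \in \ker\mathrm{D}F_p,
\]
with $g$ a $C^k$ map valued in the finite-dimensional space $C$.

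Next, I would observe that a point $(x,c)$ is a regular value of this local model if and only if $c$ is a regular value of the finite-dimensional $C^k$ map $y \mapsto g(x,y)$ for each preimage $x$. The hypothesis $k > \mathrm{index}(F) = \dim\ker\mathrm{D}F_p - \dim C$ matches exactly the regularity threshold in the classical Sard theorem for maps between finite-dimensional spaces of these respective dimensions, so applying Sard fibrewise shows that inside each such chart the critical values of $F$ form a countable union of closed subsets of $\N$ with empty interior, hence a meagre set.

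Finally, a separability argument globalises the local result: since $\M$ is second-countable, it admits a countable cover by the above normal-form charts, so the critical values of $F$ form a countable union of meagre subsets of $\N$, hence a meagre set by the Baire category theorem. The complement, the set of regular values, is therefore comeagre in $\N$. The main obstacle is the local-to-global passage, where one must show that the critical-value set in each chart is not merely of Lebesgue measure zero in a finite-dimensional slice but actually meagre in an open subset of $\N$. This forces a $\sigma$-compact exhaustion in the $E_1$-variable so that the fibrewise Sard conclusions can be assembled, and is the step where separability of $\M$ together with the Fredholm assumption is essential.
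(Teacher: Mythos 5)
The paper itself does not prove this statement; it is quoted as the classical Sard--Smale theorem with a citation to Smale, so the only meaningful comparison is with the standard proof — which is exactly the argument you sketch: a local normal form $(x,y)\mapsto(x,g(x,y))$, $x\in E_1$, $y\in\ker\mathrm{D}F_p$, $g$ valued in the finite-dimensional complement $C$, obtained from the Banach implicit function theorem; fibrewise use of the classical Sard theorem, whose regularity threshold $k\ge\max(1,\dim\ker\mathrm{D}F_p-\dim C+1)$ is precisely what $k>\mathrm{index}(F)$ supplies; and a countable chart cover to globalise. Your identification of the crux — upgrading ``measure zero in each finite-dimensional fibre'' to ``meagre in an open subset of $\N$'' — is also correct.

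The gap is in the mechanism you propose for that crux. You call for ``a $\sigma$-compact exhaustion in the $E_1$-variable,'' but $E_1$ is a closed complement of the finite-dimensional kernel, hence an infinite-dimensional Banach space, and no such space is $\sigma$-compact; nothing can be assembled that way. The compactness one actually uses lives entirely in the finite-dimensional kernel factor. Shrink the chart so that $y$ ranges over a closed ball $\bar B\subset\ker\mathrm{D}F_p$; then the critical values of $F$ in this chart form a closed set: if $(x_n,c_n)\to(x,c)$ with $c_n=g(x_n,y_n)$ and $\mathrm{D}_yg(x_n,y_n)$ not surjective for some $y_n\in\bar B$, then $x_n\to x$ automatically because the first slot of the normal form is the identity, a subsequence $y_n\to y\in\bar B$ converges by finite-dimensional compactness, and non-surjectivity passes to the limit since ``rank $\le\dim C-1$'' is a closed condition on linear maps between the fixed finite-dimensional spaces $\ker\mathrm{D}F_p$ and $C$; hence $(x,c)$ is again a critical value. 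This local properness in the kernel direction, combined with your fibrewise Sard argument (the critical-value set cannot contain a product open set, so it has empty interior), shows the critical values are locally closed with empty interior, i.e.\ nowhere dense, and the countable cover by such charts then yields meagreness, so the regular values are comeagre in $\N$ (note the ``$\Y$'' in the statement as quoted should read $\N$). So the architecture of your proof is the right one — it is Smale's — but the stated $\sigma$-compactness step must be replaced by the finite-dimensional compactness/closedness argument above.
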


Here the map $F$ being Fredholm is understood to mean that $\mathrm{D}F_p: T_p \M \to T_{F(p)} \N$ is a Fredholm map for all $p \in \M$. The index of $F$ is the index of the Fredholm map $\mathrm{D}F_p$ for any $p \in \M$. By continuity of the index, the choice of point $p$ does not matter. Finally, the word ``comeagre'' comes from the Baire Category Theorem: a \textbf{comeagre} set is a countable intersection of sets, each of whose interior is dense.

%

Recall that $\M(\mathcal{V}^{\ell,a}) := \news^{-1}(0)$. Now, consider the projection
\begin{align*}
&\mathrm{proj}_3 : \mathcal{V}^{\ell,a} \to \Herm^{\ell,a}(X) \\
&(\alpha, \rho, \omega) \mapsto \omega,
\end{align*}
and let $\pi: \M(\mathcal{V}^{\ell,a}) \to \Herm^{\ell,a}(X)$ denote the restriction of $\mathrm{proj}_3$ to $\M(\mathcal{V}^{\ell,a})$. Once again letting $\L = (\mathrm{D}\news)_{P_0}$, we note that the tangent space of $\M(\mathcal{V}^{\ell,a})$ at $P_0$ is given by the closed subspace
\[
K = T_{P_0} \M(\mathcal{V}^{\ell,a}) = \ker \L. 
\]
Thus, the derivative $\mathrm{D}\pi$ is a map $K \to C^{\ell,a}(\Lambda^{1,1}_{\R}(X))$. The following lemma is based on Lemma~A.3.6 of~\cite{McDuffSalamon}:

\begin{lemma}\label{lem-ker-coker}
The maps $\mathrm{D}\pi$ and $\L_1$ are related by the following:
\begin{enumerate}[label=(\roman*)]
\item $\ker \mathrm{D}\pi \cong \ker \L_1$
\item $\coker \mathrm{D}\pi \cong \coker \L_1$
\end{enumerate}
\end{lemma}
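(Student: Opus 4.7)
The proof is essentially a linear-algebra unpacking of the definition of $\mathrm{D}\pi$, leveraging surjectivity of the full linearization $\L$. Recall $K = T_{P_0} \M(\mathcal{V}^{\ell,a}) = \ker \L \subset \X = \X_1 \oplus \X_2$ consists of triples $(\alpha, \dot\rho, \dot\omega)$ with $\L_1(\alpha,\dot\rho) + \L_2(\dot\omega) = 0$, and $\mathrm{D}\pi: K \to \X_2$ is the projection onto the $\dot\omega$-component.

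For part (i), I would proceed by direct inspection. An element of $\ker \mathrm{D}\pi$ is a triple of the form $(\alpha, \dot\rho, 0) \in K$, and the condition $(\alpha, \dot\rho, 0) \in \ker \L$ becomes $\L_1(\alpha, \dot\rho) = 0$. So the projection $(\alpha, \dot\rho, 0) \mapsto (\alpha, \dot\rho)$ furnishes a linear isomorphism $\ker \mathrm{D}\pi \xrightarrow{\cong} \ker \L_1$, with inverse $(\alpha, \dot\rho) \mapsto (\alpha, \dot\rho, 0)$.

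For part (ii), the natural move is to define a linear map $\Psi: \X_2 \to \coker \L_1 = \Y/\im \L_1$ by $\Psi(\dot\omega) := [\L_2 \dot\omega]$, and argue that it descends to an isomorphism $\coker \mathrm{D}\pi \xrightarrow{\cong} \coker \L_1$. First, I would identify $\im \mathrm{D}\pi$ with $\ker \Psi$: an element $\dot\omega$ lies in $\im \mathrm{D}\pi$ iff there exist $(\alpha, \dot\rho) \in \X_1$ with $\L_1(\alpha, \dot\rho) + \L_2 \dot\omega = 0$, which is exactly the condition $\L_2 \dot\omega \in \im \L_1$, i.e.\ $\Psi(\dot\omega) = 0$. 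Hence $\Psi$ factors through an injection $\overline{\Psi}: \coker \mathrm{D}\pi \hookrightarrow \coker \L_1$.

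The remaining step, and really the only substantive one, is surjectivity of $\Psi$. Given any $y \in \Y$, I want to produce $\dot\omega \in \X_2$ and $(\alpha, \dot\rho) \in \X_1$ with $y = \L_1(\alpha, \dot\rho) + \L_2 \dot\omega$, which gives $[y] = \Psi(\dot\omega)$ in $\coker \L_1$. But this is precisely surjectivity of $\L = \L_1 \oplus \L_2$ on $\Y$. By Lemma~\ref{lem-transversality}, surjectivity of $\L$ holds when $\d^{\dagger}_0 \rho_0 = 0$, and as was observed in the discussion preceding that lemma, being a submersion is an open condition, so surjectivity of $\L$ continues to hold on the full tube $\T^{\ell+1,a}_{f_0,\omega_0} \cap \mathcal{H}^3(L)^{\perp_{\omega_0}}$ containing $P_0$. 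Combining, $\overline{\Psi}$ is an isomorphism, giving (ii). The main (small) obstacle is simply making sure that the surjectivity of $\L$ applies at $P_0$ even when $\rho_0 \neq 0$, which is handled by the openness of the submersion condition noted above.
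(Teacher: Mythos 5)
Your proposal is correct and follows essentially the same route as the paper: part (i) by direct inspection of $\ker \L$, and part (ii) via the map $\dot\omega \mapsto [\L_2\dot\omega]$, which you package as a first-isomorphism-theorem argument ($\ker\Psi = \im \mathrm{D}\pi$) while the paper checks well-definedness and injectivity of the induced map on $\coker \mathrm{D}\pi$ separately, with surjectivity in both cases coming from surjectivity of $\L$. Your explicit remark that surjectivity of $\L$ persists for $\rho_0 \neq 0$ by openness of the submersion condition matches the paper's discussion preceding Lemma~\ref{lem-transversality}.
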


\begin{proof}
To show (i), note that $x=\alpha + \dot\rho + \dot\omega \in K$ means that
\[
\L x = \L_1(\alpha + \dot\rho) + \L_2 \dot\omega = 0.
\]
So $\mathrm{D}\pi(x) = \dot\omega = 0$ occurs precisely when $x= \alpha + \dot\rho$ with $\L_1(\alpha + \dot\rho) = 0$.

In order to show (ii), consider the mapping 
 \[
 \Phi: \coker \mathrm{D}\pi \to \coker \L_1
 \]
 so that for any $\dot\omega \in C^{\ell,a}(\Lambda^{1,1}_{\R}(X))$, we have
\[
\Phi \left( \left[\dot\omega \right] \right) = \left[\L_2 \dot\omega \right].
\]

If $\dot\omega, \dot\omega' \in C^{\ell,a}(\Lambda^{1,1}_{\R}(X))$ represent the same equivalence class in $\coker \mathrm{D}\pi$, then for some $\alpha + \dot\rho$ we have $\alpha + \dot\rho + \dot\omega - \dot\omega' \in K$, so that
\[
\L_1(\alpha + \dot\rho) + \L_2 (\dot\omega - \dot\omega') = 0.
\]
Thus $\L_2 \dot\omega$ and $\L_2 \dot\omega'$ differ by an element of the image of $ \L_1$, and we conclude that $\Phi$ is well defined.
 
To see that $\Phi$ is injective, suppose that $\left[\L_2\dot\omega\right] = 0 \in \coker \L_1$. This means that $\L_2\dot\omega$ is in the image of $\L_1$, so that for some $\alpha + \dot\rho$, we have 
\[
\L_2\dot\omega = \L_1(\alpha + \dot\rho)
\]
This implies that
 \[
 \L(-\alpha - \dot\rho + \dot\omega) = 0.
 \]
 In other words, $-\alpha - \dot\rho + \dot\omega \in K$, and since we have $\dot\omega = \mathrm{D}\pi(-\alpha - \dot\rho + \dot\omega)$, it follows that $\dot\omega$ is in the image of $\mathrm{D}\pi$, so that $\left[\dot\omega\right] = 0 \in \coker \mathrm{D}\pi$.

Now let us show that $\Phi$ is surjective. Since $\L$ is surjective, for any $y \in \Y$, there exists $\alpha+\dot\rho + \dot\omega$ such that $\L(\alpha+\dot\rho + \dot\omega) = \L_1(\alpha+\dot\rho) + \L_2\dot\omega = y$. The class $[\dot\omega]$ will be sent to the class $[y]$, establishing the claim.
\end{proof}

Lemma~\ref{lem-ellip} and Lemma~\ref{lem-ker-coker} imply:
\begin{corollary}\label{cor-pi-fredholm}
The map $\pi: \M(\mathcal{V}^{\ell,a}) \to \Herm^{\ell,a}(X)$ is Fredholm, and its Fredholm index is equal to $0$.
\end{corollary}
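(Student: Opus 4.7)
The plan is to read off the Fredholm property and the index of $\mathrm{D}\pi$ directly from the two results just established, namely Lemma~\ref{lem-ellip} (which says that $\L_1$ is elliptic of index $0$) and Lemma~\ref{lem-ker-coker} (which relates the kernel and cokernel of $\mathrm{D}\pi$ to those of $\L_1$). There is essentially no new analysis to do; the content is purely bookkeeping.

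First I would unpack what needs to be shown. At any $P_0 \in \M(\mathcal{V}^{\ell,a})$, the derivative $\mathrm{D}\pi_{P_0} : K \to C^{\ell,a}(\Lambda^{1,1}_{\R}(X))$, where $K = \ker \L$, must be proven to have finite-dimensional kernel, closed image, and finite-dimensional cokernel. By Lemma~\ref{lem-ker-coker}(i), $\ker \mathrm{D}\pi_{P_0} \cong \ker \L_1$, and by Lemma~\ref{lem-ellip} this is finite-dimensional. By Lemma~\ref{lem-ker-coker}(ii), $\coker \mathrm{D}\pi_{P_0} \cong \coker \L_1$, and again by Lemma~\ref{lem-ellip} this is finite-dimensional as well. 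A bounded linear map between Banach spaces whose algebraic cokernel is finite-dimensional automatically has closed image (this is a standard consequence of the open mapping theorem applied to the quotient), so $\im \mathrm{D}\pi_{P_0}$ is closed, and hence $\mathrm{D}\pi_{P_0}$ is Fredholm.

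For the index, I would compute
\[
\ind(\mathrm{D}\pi_{P_0}) = \dim \ker \mathrm{D}\pi_{P_0} - \dim \coker \mathrm{D}\pi_{P_0} = \dim \ker \L_1 - \dim \coker \L_1 = \ind(\L_1) = 0,
\]
using Lemma~\ref{lem-ker-coker} in the middle equality and Lemma~\ref{lem-ellip} in the last. Since this holds at every point $P_0 \in \M(\mathcal{V}^{\ell,a})$ and the Fredholm index is locally constant, $\pi$ is a Fredholm map of index $0$.

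There is no real obstacle, but the one place to be careful is confirming the closed-image property: in principle Lemma~\ref{lem-ker-coker}(ii) only identifies the algebraic cokernel $C^{\ell,a}(\Lambda^{1,1}_{\R}(X))/\im \mathrm{D}\pi_{P_0}$ with $\coker \L_1$, so I would explicitly note that since this quotient is finite-dimensional, $\im \mathrm{D}\pi_{P_0}$ is closed and the topological and algebraic cokernels agree. With that remark in place, the corollary follows immediately.
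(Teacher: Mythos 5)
Your proposal is correct and matches the paper's reasoning: the paper states the corollary as an immediate consequence of Lemma~\ref{lem-ellip} and Lemma~\ref{lem-ker-coker}, which is exactly the bookkeeping you carry out. Your extra remark that a finite-dimensional algebraic cokernel forces $\im \mathrm{D}\pi_{P_0}$ to be closed is a valid and worthwhile clarification of a point the paper leaves implicit.
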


We are now ready to prove Theorem~\ref{thm-main-loc-holder}:

\begin{proof}[Proof of Theorem~\ref{thm-main-loc-holder}]
By Corollary~\ref{cor-pi-fredholm} and the Sard--Smale Theorem, the set $\Herm_{\mathrm{reg}, \mathcal{V}^{\ell,a}}^{\ell,a}(X)$ of regular values of the map
\[
\pi: \M(\mathcal{V}^{\ell,a}) \to \Herm^{\ell,a}(X)
\]
is comeagre in $\Herm^{\ell,a}(X)$. Let $\omega \in \Herm^{\ell,a}_{\mathrm{reg},\mathcal{V}^{\ell,a}}(X)$ and let $P_0 \in \M_{\omega}(\mathcal{V}^{\ell,a})$. The fact that $\omega \in \Herm_{\mathrm{reg},\mathcal{V}^{\ell,a}}^{\ell,a}(X)$ means that $\coker (\mathrm{D}\pi)_{P_0} = 0$, so that $\coker (\mathrm{D}\news_{\omega})_{P_0} = 0$ by Lemma~\ref{lem-ker-coker}. I.e. $(\mathrm{D}s_{\omega})_{P_0}$ is surjective. Since $P_0 \in \M_{\omega}(\mathcal{V}^{\ell,a})$ is arbitrary, it follows by the regular value theorem that $\M_{\omega}(\mathcal{V}^{\ell,a})$ is an embedded submanifold of $\M(\mathcal{V}^{\ell,a})$, whose dimension is
\[
\dim \M_{\omega}(\mathcal{V}^{\ell,a}) = \ind(\mathrm{D}\news_{\omega}) = \ind(\L_1) = 0.
\]
This means that for all $\omega \in \Herm_{\mathrm{reg},\mathcal{V}^{\ell,a}}^{\ell,a}(X)$, the set $\M_{\omega}(\mathcal{V}^{\ell,a})$ consists of isolated points.
\end{proof}

\subsection{Smooth parameters and the Taubes Trick}\label{subsec-taubes}

We will adapt an argument of Taubes (\emph{the Taubes trick}) to extend the local result of the previous section to allow for smooth parameters. We may speak of the \textbf{smooth parameter space} $\B$ given by
\[
\B := \bigcup_{\underset{\omega \in \Herm(X)}{f \in B}} \left(\left\{[f]\right\} \times \left(\T_{[f],\omega} \cap \mathcal{H}^3(L)^{\perp_{\omega}}\right) \times \{\omega\}\right), 
\]
where the lack of superscripts indicates that elements are smooth, and we are equipping $\B$ with the $C^{\infty}$ topology. Here $B$ is once again the base space in the $\Diff(L)$-principal bundle $\Emb(L,X) \to B$ described in Section~\ref{subsec-moduli-space}, but this time with smooth data and equipped with the $C^{\infty}$ topology. Furthermore $\Herm(X)$ is the space of smooth Hermitian metrics on $X$, also with the $C^{\infty}$ topology. We can once again use a local chart $\mathcal{V}$ similar to (\ref{def-chart-holder}), where all the data used to construct $\mathcal{V}$ is smooth:
\begin{equation}\label{def-chart-smooth}
\mathcal{V} := \bigcup_{\underset{\omega \in \Herm(X)}{[f] \in \mathcal{U}}} \left(\{[f]\} \times \left(\T_{[f],\omega} \cap \mathcal{H}^3(L)^{\perp}\right) \times \{\omega\}\right).
\end{equation}
Furthermore we have a section
\[
s: \mathcal{V} \to \Y = C^{\infty}(\Lambda^2(L)) \oplus \d^{\dagger}_0C^{\infty}(\Lambda^1(L))
\]
defined just as in (\ref{eqn-sec-def}), and we write $\M_{\omega}(\mathcal{V}) = s^{-1}(0)$ for the local moduli space with respect to $\omega \in \Herm(X)$.

 We will prove the following:

\begin{proposition}[Local version of Theorem~\ref{thm-main} with smooth parameters]\label{prop-main-loc-smooth}
There is a comeagre set of Hermitian metrics $\Herm_{\reg,\mathcal{V}}(X) \subset \Herm(X)$ such that for all $\omega \in \Herm_{\reg,\mathcal{V}}(X)$, the set $\M_{\omega}(\mathcal{V})$ consists of isolated points.
\end{proposition}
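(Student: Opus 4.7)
The plan is the \emph{Taubes trick}: since $\Herm(X)$ with the $C^\infty$ topology is Fr\'echet rather than Banach, the Sard--Smale Theorem does not apply directly, so I will realize $\Herm_{\reg, \mathcal{V}}(X)$ as a countable intersection of $C^\infty$-open dense subsets of $\Herm(X)$ and invoke the Baire category theorem. Fix a H\"older index $\ell \geq 2$ and $a \in (0,1)$, and choose a countable exhaustion $\mathcal{V}_1 \subset \mathcal{V}_2 \subset \cdots$ of $\mathcal{V}$ where each $\mathcal{V}_k$ is cut out by closed $C^{\ell,a}$ bounds on $(\alpha, \rho)$ with strict margins away from the boundary of the tubular neighbourhood supporting the chart $\mathcal{U}$ and from the tube condition (\ref{3-form-tube-def}), so that $\mathcal{V}_k$ is $C^{\ell, a'}$-sequentially precompact for $a' < a$ and $\bigcup_k \mathcal{V}_k = \mathcal{V}$. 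Define
\[
\Herm_k(X) := \left\{ \omega \in \Herm(X) : (\mathrm{D}\news_\omega)_{(\alpha, \rho)} \text{ is surjective for every } (\alpha, \rho, \omega) \in \news^{-1}(0) \cap \mathcal{V}_k \right\},
\]
and set $\Herm_{\reg, \mathcal{V}}(X) := \bigcap_k \Herm_k(X)$. For any $\omega$ in this intersection, every point of $\M_\omega(\mathcal{V})$ lies in some $\mathcal{V}_k$ and is regular, so $\ind(\L_1) = 0$ (Lemma~\ref{lem-ellip}) together with the regular value theorem gives that $\M_\omega(\mathcal{V})$ is a zero-dimensional manifold, i.e.\ consists of isolated points.

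Openness of $\Herm_k(X)$ in the $C^\infty$ topology I would prove by contradiction. Supposing $\omega_n \to \omega \in \Herm_k(X)$ in $C^\infty$ with $\omega_n \notin \Herm_k(X)$, choose non-regular PSL triples $(\alpha_n, \rho_n, \omega_n) \in \news^{-1}(0) \cap \mathcal{V}_k$. The $C^{\ell,a}$ bounds defining $\mathcal{V}_k$ combined with Arzel\`a--Ascoli yield a subsequential $C^{\ell, a'}$ limit $(\alpha, \rho)$ that is PSL with respect to $\omega$; by lower semicontinuity of the $C^{\ell,a}$ norm and the strict margins built into $\mathcal{V}_k$, one gets $(\alpha, \rho, \omega) \in \mathcal{V}_k$. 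Corollary~\ref{cor-ellip-reg-smooth} then upgrades $(\alpha, \rho)$ to smoothness. Since $\omega \in \Herm_k(X)$, $\mathrm{D}\news_\omega$ is surjective at $(\alpha, \rho)$ and by Lemma~\ref{lem-right-inverse} admits a bounded right inverse $h$. A Neumann-series perturbation of $h$, justified by continuity of the linearization in $(\alpha, \rho, \omega)$ in the $C^{\ell-1, a'}$ coefficient topology, produces a bounded right inverse for $\mathrm{D}\news_{\omega_n}$ at $(\alpha_n, \rho_n)$ for all large $n$, contradicting non-surjectivity.

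Density of $\Herm_k(X)$ in the $C^\infty$ topology is where Theorem~\ref{thm-main-loc-holder} enters. Given a smooth $\omega_0$, $N \geq 1$ and $\epsilon > 0$, choose $\ell \geq N$. By Theorem~\ref{thm-main-loc-holder} the regular set $\Herm^{\ell,a}_{\reg, \mathcal{V}^{\ell,a}}(X)$ is dense in $\Herm^{\ell,a}(X)$, so pick $\omega_1$ in it with $\|\omega_1 - \omega_0\|_{C^{\ell,a}} < \epsilon/2$. A $C^{\ell,a}$-version of the openness argument above (with $\omega_n \to \omega_1$ in $C^{\ell,a}$ instead of $C^\infty$) shows that every $\omega$ in a sufficiently small $C^{\ell,a}$-ball around $\omega_1$ is regular on $\mathcal{V}^{\ell,a}_k$. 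Now approximate $\omega_1$ by a smooth Hermitian metric $\tilde\omega$ via a partition of unity plus local mollification, with $\|\tilde\omega - \omega_1\|_{C^{\ell,a}}$ as small as desired; then $\tilde\omega$ is smooth, lies in this $C^{\ell,a}$-ball, and is within $\epsilon$ of $\omega_0$ in $C^N$. By Corollary~\ref{cor-ellip-reg-smooth} every PSL solution over $\tilde\omega$ in $\mathcal{V}_k$ is automatically smooth, so $\tilde\omega \in \Herm_k(X)$.

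The main obstacle I anticipate is engineering the exhaustion $\{\mathcal{V}_k\}$ with enough interior margin that $C^{\ell, a'}$ subsequential limits of PSL solutions in $\mathcal{V}_k$ genuinely lie in $\mathcal{V}_k$ (rather than only in its closure), so that the limiting submanifold does not escape the tubular neighbourhood defining $\mathcal{U}$ and the limiting $\rho$ does not saturate the tube condition (\ref{3-form-tube-def}); this is what makes the compactness step -- which drives both the $C^\infty$-openness and the $C^{\ell,a}$-openness invoked in the density step -- go through. Everything else is a routine Sard--Smale--Baire assembly combined with standard mollification estimates.
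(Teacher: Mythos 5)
Your proposal is correct in outline and follows the same Taubes-trick skeleton as the paper (realize $\Herm_{\reg,\mathcal{V}}(X)$ as a countable intersection of sets, prove each is $C^{\infty}$-open by a compactness argument and $C^{\infty}$-dense by combining Theorem~\ref{thm-main-loc-holder} with approximation by smooth metrics and elliptic regularity via Corollary~\ref{cor-ellip-reg-smooth}, then invoke Baire), but your key technical ingredient is genuinely different. The paper exhausts by parameterization-invariant geometric bounds on the immersion -- the non-collapsing condition (\ref{eqn-taubes-condition-1}) and the $L^p$ bound (\ref{eqn-taubes-condition-2}) on the second fundamental form -- and gets compactness from Breuning's theorem \cite{Breuning}, which yields $C^1$ subconvergence only after composing with diffeomorphisms, so the gauge must then be re-fixed using the $\Diff(L)$-invariance of the PSL system. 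You instead exhaust the fixed chart $\mathcal{V}$ by closed H\"older bounds with margins on the graphical data $(\alpha,\rho)$ and apply Arzel\`a--Ascoli; since points of $\mathcal{V}$ are canonical graphs over $f_0(L)$, embeddedness and reparameterization never enter, so your compactness step is more elementary and entirely adequate for the chart-local statement being proved, whereas the paper's gauge-invariant bounds have the advantage of being meaningful independently of any chart. Your Neumann-series perturbation of the right inverse (via the construction in Lemma~\ref{lem-right-inverse}) is just the quantitative form of the paper's ``being a submersion is an open condition,'' and your density step is essentially the paper's. One shared caveat: producing a smooth metric within $\epsilon$ of the $C^{\ell,a}$-regular metric $\omega_1$ in the $C^{\ell,a}$ \emph{norm} is delicate, since smooth tensors are not dense in H\"older spaces (mollification converges only in $C^{\ell,a'}$ for $a'<a$); the paper's own density argument has the same wrinkle, and in both cases it is repaired by observing that the H\"older-level openness really only requires convergence of the metrics in a slightly weaker norm (or by working in little H\"older spaces), so this is not a gap specific to your write-up.
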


 Our use of the Taubes trick follows Windes \cite{Windes}, relying in a key way on Breuning's compactness theorem \cite{Breuning} for immersions whose second fundamental form is bounded in $L^p$.

 Let $\omega_0$ be a smooth reference metric on $X$. Given $K > 0$ and $\omega \in \Herm(X)$, let $\M_{\omega,K}(\mathcal{V})$ (resp. $\M_{\omega,K}^{\ell,a}(\mathcal{V}^{\ell,a})$) denote the set of all $(f,\rho) \in \M_{\omega}(\mathcal{V})$ (resp. $\in \M_{\omega}(\mathcal{V}^{\ell,a})$) such that for all points $x,y \in L$, we have
\begin{equation}\label{eqn-taubes-condition-1}
d_{\omega_0}(f(x),f(y)) \geq \frac{1}{K} \cdot d_{f^*\omega_0}(x,y),
\end{equation}
and the second fundamental form $\two_f$ with respect to $\omega_0$ of $f$ satisfies the bound
\begin{equation}\label{eqn-taubes-condition-2}
|\two_f|_{L^p} \leq K
\end{equation}
for some $p > 3$, where the $L^p$ bound being computed with respect to $\omega_0$.

Now, let $\Herm_{\reg, K}(X)$ (resp. $\Herm_{\reg,K}^{\ell,a}(X)$) denote the space of $\omega \in \Herm(X)$ (resp. $\omega \in \Herm^{\ell,a}(X)$) such that the operator $\mathrm{D}s_{\omega}$ is a submersion at each $(f,\rho) \in \M_{\omega,K}(\mathcal{V})$.

\begin{lemma}
For each $K > 0$, the space $\Herm_{\reg,K}(X)$ (resp. $\Herm_{\reg,K}^{\ell,a}(X)$) is open and dense in $\Herm(X)$ (resp. $\Herm^{\ell,a}(X)$) with respect to the $C^{\infty}$ (resp. $C^{\ell,a}$) topology.
\end{lemma}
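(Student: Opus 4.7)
The plan is to prove openness via a compactness argument for $\M_{\omega,K}$ and density by combining Theorem~\ref{thm-main-loc-holder} with a smoothing step. I treat the $C^{\ell,a}$ statement first; the $C^\infty$ statement will follow with one extra step.

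\textbf{Openness.} I argue by contradiction. Suppose $\omega \in \Herm_{\reg,K}^{\ell,a}(X)$ is a limit of a sequence $\omega_n \to \omega$ in $C^{\ell,a}$ with each $\omega_n \notin \Herm_{\reg,K}^{\ell,a}(X)$. Choose $(f_n,\rho_n) \in \M_{\omega_n, K}^{\ell,a}(\mathcal{V}^{\ell,a})$ at which $\mathrm{D}s_{\omega_n}$ fails to be a submersion. Condition (\ref{eqn-taubes-condition-1}) is a quantitative non-collapsing/injectivity bound and (\ref{eqn-taubes-condition-2}) is an $L^p$ bound on the second fundamental form with $p>3$; these are exactly the hypotheses of Breuning's compactness theorem \cite{Breuning}. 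After reparameterizing by diffeomorphisms of $L$ (using that the moduli space quotients by $\Diff(L)$), one extracts a subsequence with $f_n \to f_\infty$ in $C^{1,\beta}$ for some $\beta<1-3/p$. Feeding this back into the PSL system and invoking the Schauder bootstrap of Theorem~\ref{thm-ellip-reg} promotes convergence to $C^{\ell+1,a}$, and a parallel argument (using $f^*\omega = -\d^\dagger_{f^*\omega}\rho$ and elliptic inversion of the Laplacian on exact $3$-forms) gives $\rho_n \to \rho_\infty$ in $C^{\ell+1,a}$. The quantitative bounds (\ref{eqn-taubes-condition-1})--(\ref{eqn-taubes-condition-2}) are lower semicontinuous under this convergence, so $(f_\infty,\rho_\infty) \in \M_{\omega,K}^{\ell,a}(\mathcal{V}^{\ell,a})$. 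Because $\L = \mathrm{D}s_\omega$ depends continuously on $(f,\rho,\omega)$ in operator norm, and because being a submersion (surjectivity with a splitting kernel) is an open condition on the space of bounded operators via the right-inverse construction of Lemma~\ref{lem-right-inverse}, the limit operator also fails to be a submersion at $(f_\infty,\rho_\infty)$. This contradicts $\omega \in \Herm_{\reg,K}^{\ell,a}(X)$.

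\textbf{Density in the $C^{\ell,a}$ case.} This is almost immediate from what has been done: the comeagre set $\Herm_{\reg,\mathcal{V}^{\ell,a}}^{\ell,a}(X)$ provided by Theorem~\ref{thm-main-loc-holder} is contained in $\Herm_{\reg,K}^{\ell,a}(X)$, since the former requires the submersion condition at every point of $\M_\omega(\mathcal{V}^{\ell,a})$ while the latter only requires it on the subset $\M_{\omega,K}^{\ell,a}(\mathcal{V}^{\ell,a}) \subseteq \M_\omega(\mathcal{V}^{\ell,a})$. Hence $\Herm_{\reg,K}^{\ell,a}(X)$ is comeagre, and in particular dense.

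\textbf{Density in the smooth case.} A basic open neighborhood of $\omega_0 \in \Herm(X)$ in the $C^\infty$ topology has the form $U = \{\omega \in \Herm(X) : \|\omega - \omega_0\|_{C^{\ell,a}} < \epsilon\}$ for some $\ell$ and $\epsilon$. By the $C^{\ell,a}$-density just established I can pick $\omega_1 \in \Herm_{\reg,K}^{\ell,a}(X)$ with $\|\omega_1 - \omega_0\|_{C^{\ell,a}} < \epsilon/2$. Convolving $\omega_1$ with a standard mollifier on $X$ produces smooth Hermitian metrics that converge to $\omega_1$ in $C^{\ell,a}$; by the openness of $\Herm_{\reg,K}^{\ell,a}(X)$ proved above, for a sufficiently fine mollification the resulting smooth $\omega_2$ lies in $\Herm_{\reg,K}^{\ell,a}(X)$ and satisfies $\|\omega_2 - \omega_0\|_{C^{\ell,a}} < \epsilon$. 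Corollary~\ref{cor-ellip-reg-smooth} shows that for smooth $\omega_2$ the moduli spaces $\M_{\omega_2,K}(\mathcal{V})$ and $\M_{\omega_2,K}^{\ell,a}(\mathcal{V}^{\ell,a})$ coincide, so $\omega_2 \in \Herm_{\reg,K}(X) \cap U$, giving the required smooth approximation.

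\textbf{Main obstacle.} The delicate step is the openness argument, specifically the application of Breuning's theorem to extract a limit PSL submanifold from the perturbed sequence and then lifting regularity to pass the submersion condition to the limit; verifying that the quantitative conditions (\ref{eqn-taubes-condition-1})--(\ref{eqn-taubes-condition-2}) survive in the limit, and handling the $\Diff(L)$-reparameterization carefully so the limit lies in the same local chart $\mathcal{V}^{\ell,a}$, are the technical pieces that require the most care.
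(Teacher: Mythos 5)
Your proposal follows essentially the same route as the paper: openness via Breuning's compactness theorem plus diffeomorphism-invariant reparameterization, elliptic regularity of the limit pair, and the openness of the submersion condition; density via the observation that the comeagre set of Theorem~\ref{thm-main-loc-holder} sits inside $\Herm^{\ell,a}_{\reg,K}(X)$, followed by the McDuff--Salamon-style approximation of a smooth metric by a regular H\"older metric and then by a nearby smooth metric within the openness radius. The only differences are cosmetic (contradiction versus closed complement, a single basic $C^\infty$-neighborhood versus the paper's diagonal sequence over $\ell$, and your slightly more explicit bootstrapping of the convergence of $(f_n,\rho_n)$), so the argument is correct as a rendering of the paper's proof.
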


\begin{proof}
Fix $K > 0$.

We show that $\Herm_{\reg,K}(X)$ is open by showing that its complement is closed. Suppose that $\left\{\omega_{\nu}\right\}_{\nu=1}^{\infty} \subset \Herm(X) \setminus \Herm_{\reg,K}(X)$ is a sequence such that $\omega_{\nu} \to \omega_{\infty} \in \Herm(X)$ in $C^{\infty}$. Then for each $\nu$, there is a pair $(f_{\nu},\rho_{\nu}) \in \M_{\reg,K}$ such that $\mathrm{D}s_{\omega_{\nu}}$ is not a submersion at $(f_{\nu},\rho_{\nu})$. By the main result of \cite{Breuning}, there exists a subsequence (which we will also denote $f_{\nu}$), a $C^1$ map $f_{\infty}: L \to X$, and a sequence of diffeomorphisms $\phi_{\nu}$ such that $f_{\nu} \circ \phi_{\nu}$ converges to $f_{\infty}$ in $C^1$. Because of the diffeomorphism invariance of the PSL system shown in Section~\ref{subsec-moduli-space}, we can reparameterize so that $f_{\nu} \to f_{\infty}$ in $C^1$.

We also note that a further subsequence of the $3$-forms $\rho_{\nu}$ also converges to a limit $\rho_{\infty}$. One sees this by noting that for each $\nu$, we have $\rho_{\nu} \in \T_{f_{\nu},\omega_{\nu}}$, and so by the fact that $\omega_{\nu} \to \omega$ and $f_{\nu} \to f$, the $C^{1,a}$ norms of the $3$-forms $\rho_{\nu}$ are uniformly bounded with respect to the limiting metric $\omega_{\infty}$. Thus some subsequence of the $\rho_{\nu}$ converges in $C^1$ to a $3$-form $\rho_{\infty}$. Each pair $(f_{\nu}, \rho_{\nu})$ satisfies the equation
\[
f_{\nu}^* \omega_{\nu} = \d^{\dagger}_{f_{\nu}^* \omega_{\nu}} \rho_{\nu},
\]
and since $f_{\nu} \to f_{\infty}$ and $\rho_{\nu} \to \rho_{\infty}$ in $C^1$, we have
\[
f_{\infty}^* \omega_{\infty} = \d^{\dagger}_{f_{\infty}^* \omega_{\infty}} \rho_{\infty}.
\]
By elliptic regularity (Corollary~\ref{cor-ellip-reg-smooth}) we see that $f_{\infty}$ and $\rho_{\infty}$ are $C^{\infty}$.  The main result in \cite{Breuning} guarantees that $f_{\infty}$ satisfies (\ref{eqn-taubes-condition-1}) and (\ref{eqn-taubes-condition-2}), and we can conclude that $(f_{\infty},\rho_{\infty}) \in \M_{\omega_{\infty},K}$. Being a submersion is an open condition, so the operator $\mathrm{D}s_{\omega}$ is not a submersion at $(f_{\infty},\rho_{\infty})$. Thus $\omega_{\infty} \not\in \Herm_{\reg,K}(X)$. A similar argument shows that $\Herm^{\ell,a}_{\reg, K}(X)$ is open in $\Herm^{\ell,a}(X)$ with respect to the $C^{\ell,a}$ topology for sufficiently large $\ell$.

%

Now we will show that $\Herm_{\reg,K}(X)$ is dense in $\Herm(X)$, following the same argument that appears in Section~3.2 of \cite{McDuffSalamon}. Let $\omega \in \Herm(X)$. We know that $\Herm^{\ell,a}_{\reg}(X)$ is dense in $\Herm^{\ell,a}(X)$ since it is a comeagre set. Therefore there is a sequence $\left\{\omega_{\ell}\right\}_{\ell=\ell_0}^{\infty}$ where for each $\ell$ we have $\omega_{\ell} \in \Herm^{\ell,a}_{\reg}(X)$ and such that 
\[
\norm{\omega - \omega_{\ell}}_{C^{\ell,a}} \leq 2^{-\ell}.
\]
Note that for each $\ell$, we have $\omega_{\ell} \in \Herm^{\ell,a}_{\reg,K}(X)$.

Since $\Herm^{\ell,a}_{\reg,K}(X)$ is open in $\Herm^{\ell,a}(X)$, for each $\ell$ there is an $\epsilon_{\ell} > 0$ such that whenever $\omega' \in \Herm^{\ell,a}(X)$ and $\norm{\omega' - \omega_{\ell}}_{C^{\ell,a}} < \epsilon_{\ell}$, we have $\omega' \in \Herm^{\ell,a}_{\reg,K}(X)$. Now choose smooth Hermitian metrics $\omega'_{\ell} \in \Herm(X)$ such that
\[
\norm{\omega'_{\ell} - \omega_{\ell}}_{C^{\ell,a}} < \min\left\{\epsilon_{\ell}, 2^{-\ell}\right\}.
\]
Then $\omega'_{\ell} \in \Herm(X) \cap \Herm^{\ell,a}_{\reg,K}(X) = \Herm_{\reg,K}(X)$, and $\omega'_{\ell} \to \omega$ as $\ell \to \infty$. Thus $\Herm_{\reg,K}(X)$ is dense in $\Herm(X)$, completing the proof.
\end{proof}

To complete the proof of Proposition~\ref{prop-main-loc-smooth}, let
\[
\Herm_{\reg, \mathcal{V}}(X) = \bigcap_{K \in \mathbb{N}} \Herm_{\reg, K}(X).
\]
Then $\Herm_{\reg, \mathcal{V}}(X)$ is an intersection of countably many dense open sets in $\Herm(X)$, and so it is comeagre. If $\omega \in \Herm_{\reg}(X)$, then the operator $\mathrm{D}s_{\omega}$ is a submersion at any smooth $(f,\rho)$, so that $\M_{\omega}(\mathcal{V})$ consists of isolated points.

\subsection{Returning to the global picture}\label{subsec-return-glob}

Finally, we can prove Theorem~\ref{thm-main}.
%


\begin{proof}[Proof of Theorem~\ref{thm-main}]
Observe that each chart $\mathcal{V}$ for $\B$ of the form (\ref{def-chart-smooth}) admits a natural projection $\mathcal{V} \to \mathcal{U}$ to the chart $\mathcal{U}$ of $B$. Since $B$ is a separable space, there are countably many charts $\mathcal{V}_i$ of the form (\ref{def-chart-smooth}) such that
\[
\bigcup\limits_{i=1}^{\infty} \mathcal{V}_i = \B.
\]
For each $\mathcal{V}_i$, Proposition~\ref{prop-main-loc-smooth} implies that there is a comeagre set $\Herm_{\reg,\mathcal{V}_i}(X) \subset \Herm(X)$ such that if $\omega \in \Herm_{\reg,\mathcal{V}_i}(X)$, then $\M_{\omega}(\mathcal{V}_i)$ consists of isolated points. Define
\[
\Herm_{\reg}(X) = \bigcap\limits_{i=1}^{\infty} \Herm_{\reg,\mathcal{V}_i}(X).
\]
If $\omega \in \Herm_{\reg}(X)$, then $\M_{\omega}(\mathcal{V}_i)$ consists of isolated points for every $i \in \mathbb{N}$. Thus the global moduli space $\M_{\omega}$ consists of isolated points. Furthermore the set $\Herm_{\reg}(X) \subset \Herm^{\ell,a}(X)$ is comeagre, since it is a countable intersection of comeagre sets.
\end{proof}

\appendix
%
%
\section{Ellipticity of the linearized operator}
This appendix fills in details used in the proof of Lemma~\ref{lem-ellip}, which are important but omitted from the main body of the text.

\subsection{Calculation of the symbol}\label{sec-symb-calc}
In Lemma~\ref{lem-ellip}, we consider the path of operators $t \mapsto \L_1^t$, where
\begin{align*}
\L_1^t: \underbrace{C^{\ell,a}(\Lambda^1(L)) \oplus C^{\ell,a}\left(\Lambda^3(L)\right)}_{\X_1} \to \underbrace{C^{\ell-1,a}\left(\Lambda^2(L)\right) \oplus \d_0^{\dagger}C^{\ell,a}\left(\Lambda^1(L)\right)}_{\Y}
\end{align*}
is given by
\begin{align*}
\L_1^t: \alpha + \dot{\rho} \mapsto -\d\alpha  +tT(\omega_0)\alpha -\d^{\dagger}_0 \dot\rho +\d_0^{\dagger}\left(|\Omega|_{\omega_0} \alpha\right).
\end{align*}
In this appendix, we show that each operator $\L_1^t$ is elliptic.

 Since $\L_1^t$ is a first order differential operator, its \textbf{symbol} $\sigma_{(x,\xi)}(\L_1^t)$ is given by the following limit, where $\phi \in C^{\infty}(L)$ is any function satisfying $\d\phi(x) = \xi$:
\begin{align*}
\sigma_{(x,\xi)}(\L_1^t) (\alpha + \dot{\rho}) &= \lim\limits_{s \to \infty} s^{-1} e^{-s\phi(x)} \L_1^t\left(e^{s\phi}(\alpha + \dot{\rho})\right).
\end{align*}
Thus to compute the symbol, we compute $\L_1^t\left(e^{s\phi}(\alpha + \dot{\rho})\right)$ and ignore any terms that are not linear is $s$. We have
\begin{align*}
\L_1^t\left(e^{s\phi}(\alpha + \dot{\rho})\right) &= -\d\left(e^{s\phi} \alpha\right) + tT\left(e^{s\phi}\alpha\right) -\d_0^{\dagger}\left(e^{s\phi} \dot{\rho}\right) + \d_0^{\dagger} \left(|\Omega|_{\omega_0}^t e^{s\phi} \alpha\right) \\
&= -\d(e^{s\phi}) \wedge \alpha - e^{s\phi} \d\alpha  + e^{s\phi} tT\alpha - e^{s\phi} \d_0^{\dagger} \dot{\rho} + \iota_{\nabla e^{s\phi}} \dot{\rho}  \\
& \hspace{2cm} +|\Omega|_{\omega_0}^t e^{s\phi} \d_0^{\dagger} \alpha- \left\langle \d\left(|\Omega|_{\omega_0}^t e^{s\phi}\right), \alpha \right\rangle \\
&= e^{s\phi} \left[ s\left(-\xi \wedge \alpha - |\Omega|_{\omega_0}^t \langle \xi, \alpha \rangle + \iota_{\xi^{\sharp_0}} \dot{\rho}\right) + \lot \right],
\end{align*}
by standard Leibniz rules for $\d$ and $\d_0^{\dagger}$. Thus, the symbol is given by
\begin{align*}
\sigma_{(x,\xi)}(\L_1^t) (\alpha + \dot{\rho})= -\xi \wedge \alpha + \iota_{\xi^{\sharp_0}} \dot{\rho}- |\Omega|_{\omega_0}^t \langle \xi, \alpha \rangle.
\end{align*}
\subsection{Proof that the symbol is a linear isomorphism}\label{sec-symb-isom}

We will show that for each $x \in L$, whenever $\xi \in T_x^*L$ with $\xi \neq 0$, the symbol
\[
\sigma_{(x,\xi)}(\L_1^t): \Lambda^3(T_x^*L) \oplus \Lambda^1(T_x^*L) \to \Lambda^2(T_x^*L) \oplus \Lambda^0(T_x^*L)
\]
is a linear isomorphism. First of all, observe that the dimensions of the domain and codomain are equal and finite, so it suffices to show that $\sigma_{(x,\xi)}(\L_1^t)$ is injective.

We claim that if $\xi \wedge \alpha - \iota_{\xi^{\sharp_0}} \dot{\rho} = 0$ and $\langle \alpha, \xi \rangle = 0$, then $\dot{\rho} = 0$ and $\alpha = 0$. To that end, let $\{e_1, e_2, e_3\}$ be an orthonormal basis for $T_x L$ and $\{\varepsilon^1, \varepsilon^2, \varepsilon^3 \}$ the dual basis for $T_x^* L$. Then given vectors $v,w \in T_x L$, we have
\[
(\xi \wedge \alpha)(v,w) = \xi(v) \alpha(w) - \xi(w) \alpha(v),
\]
which when expressed in coordinates, with $\xi = \xi_k \varepsilon^k, \alpha = \alpha_k \varepsilon^k$ and $v = v^k e_k, w = w^k e_k$, is given by
\begin{align}
\begin{split}\label{eqn-append1}
(\xi \wedge \alpha)(v,w) &= \left(\xi_1 \alpha_2 - \xi_2 \alpha_1\right) v^1 w^2  + \left(\xi_2 \alpha_1 - \xi_1 \alpha_2\right) v^2 w^1\\
& \hspace{1cm} + \left(\xi_1 \alpha_3 - \xi_3 \alpha_1\right) v^1 w^3  + \left(\xi_3 \alpha_1 - \xi_1 \alpha_3\right) v^3 w^1 \\
& \hspace{2cm}  + \left(\xi_2 \alpha_3 - \xi_3 \alpha_2\right) v^2 w^3 + \left(\xi_3 \alpha_2 - \xi_2 \alpha_3\right) v^3 w^2.
\end{split}
\end{align}
On the other hand, we have
\[
\iota_{\xi^{\sharp_0}} \dot{\rho}(v,w) = |\dot{\rho}| \cdot \det \begin{pmatrix}
\xi_1 & v^1 & w^1 \\
\xi_2 & v^2 & w^2 \\
\xi_3 & v^3 & w^3
\end{pmatrix},
\]
so that
\begin{equation}\label{eqn-append2}
\iota_{\xi^{\sharp_0}} \dot{\rho}(v,w) = |\dot{\rho}| \left(\xi_1 v^2 w^3 - \xi_1 v^3 w^2 + \xi_2 v^3 w^1 - \xi_2 v^1 w^3 + \xi_3 v^1 w^2 - \xi_3 v^2 w^1\right).
\end{equation}
If $\xi \wedge \alpha = \iota_{\xi^{\sharp_0}} \dot{\rho}$, then comparing coefficients in (\ref{eqn-append1}) and (\ref{eqn-append2}) yields the system of equations
\begin{align}
\begin{split}\label{syst-append}
|\dot{\rho}| \xi_1 &= \xi_2 \alpha_3 - \xi_3 \alpha_2 \\
|\dot{\rho}| \xi_2 &= \xi_3 \alpha_1 - \xi_1 \alpha_3 \\
|\dot{\rho}| \xi_3 &= \xi_1 \alpha_2 - \xi_2 \alpha_1.
\end{split}
\end{align}
Identifying $T_x^*L$ with $\R^3$ in the usual way, we interpret (\ref{syst-append}) as saying
\[
|\dot{\rho}| \xi = \xi \times \alpha,
\]
which for $\xi \neq 0$ can only be satisfied if $|\dot{\rho}| = 0$ and $\alpha \parallel \xi$. Then imposing the assumption $\langle \xi, \alpha \rangle = 0$ gives $\alpha = 0$, completing the proof.
\bibliographystyle{acm}
\bibliography{refs}
\end{document}